\crefname{enumi}{}{}
\newif\ifpdf
\numberwithin{equation}{section}
\newtheorem{thm}[equation]{Theorem}
\newcommand{\myendsymbol}{\ensuremath{\diamondsuit}}
\declaretheorem[
  style=definition,
  title=Example,
  qed={$\myendsymbol$},
  refname={example,examples},
  Refname={Example,Examples},
  sharenumber=thm,
]{exa}
\declaretheorem[
  style=definition,
  title=Definition,
  qed={$\myendsymbol$},
  sharenumber=thm,
]{dfn}
\declaretheorem[
  style=plain,
  title=Lemma,
  qed={},
  sharenumber=thm,
]{lem}
\declaretheorem[
  style=plain,
  title=Corollary,
  qed={},
  sharenumber=thm,
]{cor}
\declaretheorem[
  style=definition,
  title=Remark,
  qed={$\myendsymbol$},
  sharenumber=thm,
]{rmk}
\declaretheorem[
  style=definition,
  title=Notation,
  qed={$\myendsymbol$},
  sharenumber=thm,
]{ntn}
\declaretheorem[
  style=definition,
  title=Hypothesis,
  qed={$\myendsymbol$},
  sharenumber=thm,
]{hyp}
\declaretheorem[
  style=definition,
  title=Convention,
  qed={$\myendsymbol$},
  sharenumber=thm,
]{cnv}
\newcommand{\bsx}{\boldsymbol{x}}
\newcommand{\calA}{\mathcal{A}}
\newcommand{\calE}{\mathcal{E}}
\newcommand{\calF}{\mathcal{F}}
\newcommand{\calO}{\mathcal{O}}
\newcommand{\scrM}{\mathscr{M}}
\newcommand{\frakp}{\mathfrak{p}}
\DeclareMathOperator{\Ext}{{\textup{Ext}}}
\DeclareMathOperator{\vol}{\textup{vol}}
\DeclareMathOperator{\Hom}{\textup{Hom}}
\DeclareMathOperator{\sHom}{\mathscr{H}\hspace{-2pt}\mathit{om}}
\DeclareMathOperator{\id}{\textup{id}}
\newcommand{\onto}{\twoheadrightarrow}
\newcommand{\qdeg}{\textup{qdeg}}
\newcommand{\CC}{{\mathbb{C}}}
\newcommand{\DD}{{\mathbb{D}}}
\newcommand{\KK}{{\mathbb{K}}}
\newcommand{\NN}{{\mathbb{N}}}
\newcommand{\RR}{{\mathbb{R}}}
\newcommand{\ZZ}{{\mathbb{Z}}}
\newcommand{\mca}{\mathcal{A}}
\newcommand{\mcd}{\mathcal{D}}
\newcommand{\ra}{\rightarrow}
\newcommand{\into}{\hookrightarrow}
\newcommand{\del}{\partial}
\newsavebox\foobox
\newcommand{\suchthat}{\;\ifnum\currentgrouptype=16 \middle\fi|\;}
\DeclareMathOperator{\sheafHom}{\mathscr{H}\kern -3pt\textit{om}\kern 1pt}
\DeclareMathOperator{\sheafDer}{\mathscr{D}\kern -1pt\textit{er}\kern 1pt}
\DeclareMathOperator{\tdeg}{tdeg}
\DeclareMathOperator{\rank}{rank}
\DeclareMathOperator{\level}{level}
\let\originalleft\left
\let\originalright\right
\renewcommand{\left}{\mathopen{}\mathclose\bgroup\originalleft}
\renewcommand{\right}{\aftergroup\egroup\originalright}
\newcommand{\bolda}{{\mathbf a}}
\newcommand{\boldb}{{\mathbf b}}
\newcommand{\bolde}{{\mathbf e}}
\newcommand{\boldu}{{\mathbf u}}
\newcommand{\boldv}{{\mathbf v}}
\newcommand{\boldx}{{\mathbf x}}
\newcommand{\boldeps}{{\boldsymbol\varepsilon}}
\newcommand{\Hprim}{{H_{\calA,\textrm{prim}}}}
\newcommand{\Iprim}{{I_{\calA,\textrm{prim}}}}
\newcommand{\Iell}{{I_\calA}^{\!\!\!\!\![\ell]}}
\newcommand{\Tprim}{{T_{\mathrm{prim}}}}
\newcommand{\minus}{\smallsetminus}
\DeclareSymbolFont{extraup}{U}{zavm}{m}{n}
\DeclareMathSymbol{\varheartsuit}{\mathalpha}{extraup}{86}
\DeclareMathSymbol{\vardiamond}{\mathalpha}{extraup}{87}
\begin{document}

\title{Hypergeometric systems from groups with torsion}
\author{Thomas Reichelt, Christian Sevenheck, and Uli Walther}

\renewcommand{\thefootnote}{}
\footnotetext{
 CS was partially supported by the DFG grants SE 1114/5-2 and SE 1114/7-1. UW was supported in part by NSF Grant DMS-2100288 and by Simons Foundation Collaboration Grants for Mathematicians
\#580839 and SFI-MPS-TSM-00012928. UW was also supported by NSF Grant DMS-1928930, while in residence at the Simons Laufer Mathematical Sciences Institute (formerly MSRI) in Berkeley, California, during the Spring 2024 semester.
\\
\noindent 2020 \emph{Mathematics Subject Classification.} 32C38, 14F10, 32S40\\ Keywords: GKZ-System, Euler--Koszul homology}
\renewcommand{\thefootnote}{\arabic{footnote}}

\newcommand\todo[1]{~\\~\\\noindent TO DO...\\\indent#1~\\...ASAP}

\maketitle

\begin{abstract}
We consider $A$-hypergeometric (or GKZ-)systems in the case where the grading (character) group is an arbitrary finitely generated Abelian group. Emulating the approach taken for classical GKZ-systems in \cite{MMW05} that allows for a coefficient module,
we show that these $D$-modules are holonomic systems. For this purpose we formulate an Euler--Koszul complex in this context, built on an extension of the category of $A$-toric modules. We derive that these new systems are regular holonomic under circumstances that are similar to those that lead to regular holonomic classical GKZ-systems.

For the appropriate coefficient module, our $D$-modules specialize to the "better behaved GKZ-systems" introduced by Borisov and Horja. We certify the corresponding $D$-modules as regular holonomic, and establish a holonomic duality on the level of $D$-modules that was suggested on the level of solutions by Borisov and Horja and later shown by Borisov and Han in a special situation, \cite{BorHor-DualConj,BorEtAl-Dual2}.
\end{abstract}

\tableofcontents

\section{Introduction} \label{sec:intro}

In a series of papers (\cite{BorHor-Mellin, BorHor-MathAnn, BorHor-DualConj, BorEtAl-Dual1, BorEtAl-Dual2}), Borisov together with various collaborators
has introduced a system of linear partial differential equations called the \emph{better behaved GKZ-system}. It has considerable similarities with the members of a class of $D$-modules that generalize the widely studied GKZ-systems, which themselves are far-reaching generalizations of classical hypergeometric differential equations. We refer to our survey \cite{HypSurvey} for a comprehensive overview on GKZ-systems.

A GKZ-system arises from a finite set of elements in a finite-dimensional lattice (the character group), together with a complex parameter vector. In a more general setup, an additional component enters in the form of a coefficient module as introduced in \cite{MMW05}. The systems studied by Borisov \emph{et al.} are special cases in a more general setup. The generalization arises from replacing the lattice by an arbitrary finitely generated Abelian group. This introduction of possible torsion creates interesting new phenomena: for example, the associated toric ideal is now replaced by a smaller ideal that is no longer prime. Within this more general situation, Borisov and his collaborators consider two specific systems. One arises from taking the coefficient module to be the normalization of the associated toric ring, while the other comes from the canonical module of this normalization. In the classical (torsion-free) situation, these choices for the coefficient module assure that the rank of these systems is independent of the parameter vector (which otherwise is not always true, even in the classical case without coefficient module). This absence of rank jumps is why Borisov called these systems ``better behaved''.

In the present paper, we study these systems in general: we allow torsion and arbitrary coefficient modules. We begin with a study of the ideals that replace of toric ideals, and in particular prove that they are still "reasonably close" to the toric ideal. This is necessary to bring in our main tool, the \emph{Euler--Koszul complex} from \cite{MMW05}. We show that these new systems are the terminal homology group of an appropriate Euler--Koszul complex, but in order to allow for torsion in the extended character group, we must extend the category of toric modules (which serve as coefficient modules) from \cite{MMW05} to the category of \emph{twisted toric module}, or \emph{ttoric modules} for short, see Subsection \ref{subsec-toric-mods}.

For this extended category we show that the
 homology groups of the Euler--Koszul complex are holonomic, and we establish a criterion in terms of the coefficient module and the parameter vector to determine (non-)vanishing. As a consequence, we infer that all GKZ-systems with finitely generated Abelian character group and with arbitrary coefficient module are holonomic, and we characterize when those attached to the natural coefficient module are regular holonomic. These statements apply in particular to Borisov's systems.

Following the trail of ideas of \cite{MMW05}, we then investigate the behavior of these systems under holonomic duality. Euler--Koszul functors are self-dual up to a shift in the parameter vector, and thus holonomic duality reduces to standard duality over the underlying ttoric ring. This allows us
to give a compact and conceptional proof of the duality statement in  \cite{BorEtAl-Dual2} regarding the systems considered in \cite{BorHor-MathAnn}, but for an arbitrary character group (allowing torsion) and without the assumption of normality of the underlying toric ring.

\medskip

\thanks{We would like to express our gratitude to the referees for their careful reading and for suggesting further references.}

\section{The $\calA$-graded Category, Toric and Ttoric Modules}

\subsection{General Setup}
For any set $S$, and any ring $\KK$ we write $\KK^S$ for
the free $\KK$-module $\bigoplus_{s\in S}\KK$.

Let
\[
N=F\oplus\ZZ^d
\]
be a finitely generated Abelian group where $F$ is the torsion part of
$N$. The lattice
\[
N^\vee := \Hom_\ZZ(N,\ZZ)
\]
embeds naturally
into $
\Hom_\ZZ(N,\CC)
\simeq N^\vee\otimes_\ZZ\CC
\simeq \CC^d$.  As in \cite{BorHor-Mellin}, let
\[
\mca = \{\bolda_1,\ldots, \bolda_n\} \subseteq N
\]
be a finite multi-set of elements of $N$ (\emph{i.e.}, possibly with repetitions). We set
\[
\ell:=|F|
\]
to be the torsion index,
and choose
\[
\beta \in N \otimes_\ZZ \CC.
\]
There is a morphism
\begin{eqnarray*}
  \pi: N &\ra& N/F=:\bar N =\ZZ^d,\\
  \boldu &\mapsto& \bar\boldu,
\end{eqnarray*}
and whenever it suits us, we identify $\pi(N)$ with its image under the inclusion
\[
\pi(N)\into \pi(N)\otimes_\ZZ\RR=:N_\RR.
\]
We can identify non-canonically
\[
\bar N=\bigoplus_{i=1}^d \ZZ\cdot\boldeps_i
\]
where $\boldeps_1,\ldots,\boldeps_d$ is a chosen basis for $N$.
Denote by
\[
K_\RR:=\RR_{\geq 0}\cdot\pi(\mca)\subseteq N_\RR
\]
the rational polyhedral cone generated by the images $\pi(\bolda_j)\in
N_\RR$.

\begin{hyp}
We shall work throughout under the following assumptions:
\begin{enumerate}
  \item The set $A:=\{\pi(\bolda_j)\mid 1\le j\le n\}$ spans the lattice $\bar N=\pi(N)$ as a $\ZZ$-module; this implies in particular that the index
  \[
  \delta:=[N:\ZZ\calA]
  \]
  is a (possibly strict) divisor of $\ell=|F|$.
  \item The cone $K_\RR$ is \emph{pointed}: the origin is the only unit of the additive semigroup inherited from $\RR^d$ on $K_\RR$.
\end{enumerate}
\end{hyp}
Among the set of all vector space morphisms $\{\tau\colon N_\RR\to\RR\}$
choose a minimal set $\calF_K$ defined such that
\[
K_\RR=\bigcap_{\tau\in\calF_K}(\tau^{-1}([0,\infty))
\]
(minimality means that the intersection is irredundant) and set
\begin{equation}\label{def:SemiGroupK}
K:=\pi^{-1}(K_\RR)\subseteq N,
\end{equation}
the preimage under $\pi$.
Then  $\calF_K$ is finite, and
$\{K_\RR\cap \tau^{-1}(0)\}_{\tau\in\calF_K}$
are the \emph{facets of $\mca$}.  As is traditional, we identify a facet $\tau$ with the intersection $\tau\cap \pi(A)$ as well as with the semigroup spanned by this intersection, or with $\tau\cap K$.

\begin{ntn}
We will further denote by
\[
A:=\pi(\calA),
\]
the finite multi-set consisting of the images in $\bar N=\ZZ^d$ of the elements of $\calA$ under the morphism $\pi$. Fixing a lattice basis $\{\bolde_1,\ldots,\bolde_n\}$ for
\[
L:=\bigoplus \ZZ\cdot \bolde_i \simeq \ZZ^\calA
\]
we will always allow ourselves to view $A$ as an integer $d\times n$ matrix, with columns $\{\bar\bolda_j\}$ and entries $(a_{i,j})$; as $A$ is a multi-set, this matrix might have repeated columns.

We define  polynomial rings, graded by $N$ and $\bar N$ respectively,
\[
R_\calA:=\CC[\{\del_j\mid \bolda_j\in\calA\}],\qquad R_A=\CC[\{\del_j\mid \bar\bolda_j\in\ A\}],
\]
by grading them according to
\[
\deg_\calA(\del_j)=\bolda_j,\qquad \deg_A(\del_j)=\bar\bolda_j.
\]
(If $\calA$ is truly a multi-set, so $\bolda_j=\bolda_{j'}$ for some $j,j'$ with $j\neq j'$ then $R_\calA$ and $R_A$ have one generator each for both copies).

We further introduce the non-commutative  \emph{Weyl algebras}
\[
D_\calA:=R_\calA[\{x_j\mid \bolda_j\in\calA\}], \qquad D_A:=R_A[\{x_j\mid \bar\bolda_j\in\ A\}],
\]
that we grade by
by setting $\deg_{(-)}(x_j)=-\deg_{(-)}(\del_j)$.

In $R_A$ there is the \emph{toric} (prime) \emph{ideal}
\[
I_A:=R_A\cdot\{\del^\boldu-\del^\boldv\mid A\cdot(\boldu-\boldv)=0\}.
\]
Replacing $A$ with $\calA$, we similarly define the $R_\calA$-ideal
\begin{equation}\label{eqn-IcalA}
I_\calA:=R_\calA\cdot\{\del^\boldu-\del^\boldv\mid \calA\cdot(\boldu-\boldv)=0\},
\end{equation}
which we also call \emph{toric};  the dot product here expresses the process of forming $\ZZ$-linear combinations (inside $N$) of elements of $\calA$.

The quotient rings
\[
S_A:=R_A/I_A\quad\text{ and }\quad S_\calA:=R_\calA/I_\calA
\]
are the semigroup rings $S_A=\KK[\NN A]$ and $S_\calA=\KK[\NN\calA]$ respectively.
\end{ntn}

There are graded ring morphisms
\[
\id_R(\pi)\colon R_\calA\to R_A, \qquad
\id_D(\pi)\colon D_\calA\to D_A,
\]
induced by the identity map on the (ungraded) underlying rings, and by the morphism of grading groups $N\to\bar N$. They induce functors (both denoted $\iota_\pi$) from the categories of graded $R_\calA$- and $D_\calA$-modules
to those of graded $R_A$- and $D_A$-modules that are the identity map on the underlying modules.
They are, however, not equivalences of categories, unless $F$ is trivial.

\begin{rmk}
\begin{asparaenum}
\item Note that syzygies between elements of $\calA$ are also syzygies between the corresponding elements of $A$, and so $\iota_\pi(I_{\calA})\subseteq I_A$.
\item
The ideal $I_\calA$ is, in contrast to $I_A$, usually not prime. For example, if $\calA=\{(1\mod 4\ZZ,1),(1\mod 4\ZZ,2)\}\subseteq N:=(\ZZ/4\ZZ)\oplus \ZZ$, then $R_\calA$ and $R_A$ are $\CC[\del_1,\del_2]$, $I_\calA=R_\calA\cdot(\del_1^4-\del_2^2)$, $I_A=R_A\cdot (\del_1^2-\del_2)$, and $\Iell\subseteq R_\calA$ (defined in the proof of Lemma \ref{lem-ttoric} below) is $R_\calA\cdot(\del_1^8-\del_2^4)$.
\end{asparaenum}
\end{rmk}

\subsection{Binomial Ideals}

We review some facts from \cite{EisenbudSturmfels}.
Let
\[
\rho\colon L_\rho\to \CC^*
\]
be a character
on a sublattice $L_\rho$ of $L$.
We say that $L_\rho\into L$ is \emph{saturated} (and then call $\rho$ a \emph{saturated partial character}) if the group $L/L_\rho$ is torsion-free---or, equivalently, if $L_\rho$ is a direct summand of $L$.

\begin{rmk}\label{rmk-extend-rho}
Suppose $\rho'$ is a saturated partial character with associated sublattice $L'$. Let $L''$ be a complement, $L=L'\oplus L''$. Expand $\rho'$ to a character $\rho$ on all of $L$ by setting $\rho(L'')=1$. Then there is a monomial isomorphism $\rho^*$ of $R_A$ given by $\rho^*(\del^{\boldu'+\boldu''})=\rho(\boldu')\cdot \del^{\boldu'+\boldu''}$ for
any $\boldu'\in L', \boldu''\in L''$.
\end{rmk}

From any partial character $\rho$ we define a binomial ideal
\[
I_+(\rho):=R_A\cdot \{\del^\boldu-\rho(\boldu-\boldv)\del^\boldv\mid \boldu,\boldv\in\NN^n, \boldu-\boldv\in  L_\rho\}.
\]
This ideal is prime if and only if $\rho$ is saturated.

Let  now $\frakp$ be an arbitrary binomial prime ideal in $R_A$. There is an induced partition $\{\del_1,\ldots,\del_n\}=\{y_1,\ldots,y_s\}\cup\{z_1,\ldots,z_{n-s}\}$ where $\{y_1,\ldots,y_s\}= \frakp\cap \{\del_1,\ldots,\del_n\}$ are the variables inside $\frakp$. (Note that this is perhaps not the entire linear part of $\frakp$; there might be linear binomials in $\frakp$). There is a corresponding splitting of lattices $\ZZ^n=\ZZ^s\times\ZZ^{n-s}$.
By \cite[Cor.~2.4]{EisenbudSturmfels}, $\frakp$  has the form
\[
\frakp=R_A\cdot\{y_1,\ldots,y_s\}+I_+(\rho)
\]
for some partial saturated character  $\rho$ in the lattice $\ZZ^{n-s}$.

A special case is when $L_\rho=\ker(A)$ and $\rho$ is the trivial character; this results in $I_+(\rho)=I_A$. More generally, if $L_\rho=\ker(A)$ and $\rho$ is an arbitrary character, we denote the prime ideal $I_+(\rho)$ by $I_{A,\rho}$. Note that $I_{A,\rho}$ relates to $I_A$ via the $R_A$-isomorphism from Remark \ref{rmk-extend-rho}.

\medskip

For a face $\tau$ of the cone $K_\RR=\RR_{\geq 0}A$, denote by  $I^\tau_A$ the prime ideal defining the toric ring of $\tau$ as a quotient of $R_A$,
\[
R_A/I^\tau_A\simeq \CC[\tau\cap\NN A].
\]
Then $I^\tau_A$ is the ideal sum of $I_A$ with the ideal generated by all $\del_j$ with $\bar \bolda_j\not\in\tau$.

For a partial character $\rho$ with $L_\rho=\ker(A)$ we define similarly
\[
I^\tau_{A,\rho}:=R_A\cdot(\{\del_j\mid \bar\bolda_j\not\in\tau\}\cup \underbrace{I_+(\rho)}_{=I_{A,\rho}}).
\]
We note that $I^\tau_{A,\rho}$ is a prime ideal. Indeed, $R_A/I^\tau_{A,\rho}=R_B/I_{B,\rho'}$ where $B=A\cap\tau $ corresponds to  the variables that are not generators of $I^\tau_{A,\rho}$, and where $\rho'$ is the restriction of $\rho$ to $\ZZ B$.

We are going to need a statement of the following form.
\begin{lem}\label{lem-Itaurho}
If $\frakp$ is a prime ideal in $R_A$ that is $A$-homogeneous, contains $I_{A,\rho}$ for some character $\rho$ on $\ZZ A$, then it is of the form $I^\tau_{A,\rho}$.
\end{lem}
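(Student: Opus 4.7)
The plan is to reduce to the case of the trivial character via the monomial automorphism $\rho^*$ from Remark~\ref{rmk-extend-rho}, and then invoke the classical correspondence between $A$-homogeneous primes of the affine semigroup ring $S_A = R_A/I_A$ and faces of the cone $K_\RR$.

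For the reduction, I first verify that $\rho^*$ is a morphism of $A$-graded rings (it rescales each monomial by a nonzero scalar and so preserves $A$-degrees), and that it sends $I_A$ to $I_{A,\rho}$: indeed a generator $\del^\boldu-\del^\boldv$ of $I_A$ has $\boldu-\boldv\in L_\rho=\ker(A)$, and its image $\rho(\boldu')\del^\boldu-\rho(\boldv')\del^\boldv$ is a unit multiple of $\rho(\boldu-\boldv)\del^\boldu-\del^\boldv\in I_+(\rho)$. The same scalar rescaling, applied to each individual variable $\del_j$, shows that $\rho^*$ maps $I^\tau_A$ bijectively to $I^\tau_{A,\rho}$ for every face $\tau$. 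Consequently $\frakp':=(\rho^{-1})^*(\frakp)$ is an $A$-homogeneous prime of $R_A$ containing $I_A$, and it suffices to show that every such $\frakp'$ equals $I^\tau_A$ for some face $\tau$.

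By the standard bijection between primes of $R_A$ containing $I_A$ and primes of $S_A\simeq\CC[\NN A]$, the task reduces to classifying the $A$-homogeneous primes of $S_A$. Since every nonzero $A$-graded piece of $S_A$ is spanned by a single monomial (two monomials of the same $A$-degree differ by an element of $I_A$), any $A$-homogeneous ideal of $S_A$ is a monomial ideal; such an ideal is prime iff its complement in $\NN A$ is a face of the semigroup $\NN A$ (closed under sums and under taking summands). Faces of $\NN A$ are in natural bijection with faces $\tau$ of $K_\RR$ via $\tau\mapsto\tau\cap\NN A$, which is the classical face correspondence for affine toric varieties. Unravelling these bijections, the prime of $S_A$ attached to $\tau$ is spanned by all $\del^\boldu$ with $\bar\boldu\notin\tau$, and its preimage in $R_A$ equals $I_A$ together with the ideal generated by the $\del_j$ with $\bar\bolda_j\notin\tau$, using that $\tau$ is closed under sums to see that $\bar\boldu\notin\tau$ forces at least one such $\del_j$ to divide $\del^\boldu$. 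This preimage is by definition $I^\tau_A$, and applying $\rho^*$ gives $\frakp=I^\tau_{A,\rho}$.

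The proof is essentially bookkeeping: the only points requiring any care are checking that $\rho^*$ intertwines $I_A$ with $I_{A,\rho}$ and $I^\tau_A$ with $I^\tau_{A,\rho}$, and recording the face correspondence for affine semigroup rings. Neither presents a genuine obstacle; the former is an immediate unit-rescaling calculation and the latter is standard (see, e.g., \cite{EisenbudSturmfels} for the binomial-prime viewpoint).
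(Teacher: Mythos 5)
Your approach---conjugating by the monomial automorphism $\rho^*$ to reduce to the classical untwisted face correspondence for $\CC[\NN A]$---is a genuine alternative to the paper's proof, which never leaves the twisted setting: the paper argues directly that the $A$-graded components of $R_A/I_{A,\rho}$ are at most one-dimensional, so that any $A$-homogeneous element of $\frakp/I_{A,\rho}$ is represented by a monomial, and primality then forces $\frakp$ to contain a subset of the variables whose complement defines a face. Both routes ultimately rest on the same two structural facts (one-dimensional graded pieces, and primality turning monomial relations into variable relations); the paper's version is slightly shorter, while yours is more modular since it outsources everything to the classical $\rho=1$ case.

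However, there is a directional error in your computation of $\rho^*(I_A)$. Writing $\boldu-\boldv\in\ker(A)$, so that $\boldu''=\boldv''$ and hence $\boldu-\boldv=\boldu'-\boldv'$, one finds
\[
\rho^*(\del^\boldu-\del^\boldv)=\rho(\boldu')\del^\boldu-\rho(\boldv')\del^\boldv
=\rho(\boldv')\bigl(\rho(\boldu-\boldv)\,\del^\boldu-\del^\boldv\bigr),
\]
and $\rho(\boldu-\boldv)\del^\boldu-\del^\boldv$ is \emph{not} a unit multiple of the generator $\del^\boldu-\rho(\boldu-\boldv)\del^\boldv$ of $I_+(\rho)$ unless $\rho(\boldu-\boldv)^2=1$; rather it is $-\bigl(\del^\boldv-\rho^{-1}(\boldv-\boldu)\del^\boldu\bigr)$, a generator of $I_+(\rho^{-1})$. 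So in fact $\rho^*(I_A)=I_{A,\rho^{-1}}$, and the useful identity is the one in the other direction, $\rho^*(I_{A,\rho})=I_A$ (apply $\rho^*$ to $\del^\boldu-\rho(\boldu-\boldv)\del^\boldv$ and use $\rho(\boldu-\boldv)\rho(\boldv')=\rho(\boldu')$). Consequently $\frakp':=(\rho^{-1})^*(\frakp)$ does not contain $I_A$; it contains $(\rho^{-1})^*(I_{A,\rho})=I_{A,\rho^2}$. The repair is immediate: set $\frakp':=\rho^*(\frakp)$, which contains $\rho^*(I_{A,\rho})=I_A$; classify $\frakp'=I^\tau_A$ exactly as you do; then since $\rho^*$ sends each variable to a nonzero scalar multiple of itself, $(\rho^{-1})^*$ carries $I^\tau_A$ to $I^\tau_{A,\rho}$, giving $\frakp=(\rho^{-1})^*(\frakp')=I^\tau_{A,\rho}$. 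With the roles of $\rho^*$ and $(\rho^{-1})^*$ swapped, the rest of your argument is correct.
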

\begin{proof}
In the $A$-graded module $R_A/\frakp=\bigoplus_{\boldu\in\ZZ A}(R_A/\frakp)_\boldu$, each graded component is either zero or a 1-dimensional vector space. Indeed, already the relations coming from $I_{A,\rho}$ identify (up to nonzero scalars) monomials of $R_A$ if they have equal $A$-degree.
Hence, any $A$-homogeneous element  of $\frakp/I_{A,\rho}$ is the coset of a monomial in $R_A$.  But $\frakp$ being prime implies that
$\frakp$ contains a variable appearing in such monomial.
Let $V_\frakp$ be the ideal generated by the variables that are in $\frakp$ and let $\tau$ be the face of $A$ defined by $[\del_j\in V_\frakp]\Leftrightarrow [\bolda_j\not\in \tau]$; then $\frakp=I^\tau_{A,\rho}$.
\end{proof}

\subsection{Toric Modules}\label{subsec-toric-mods}
We recall from \cite{MMW05} the notion of a \emph{toric $A$-module}, as an $A$-graded $R_A$-module that has a finite filtration by $A$-graded modules such that each composition factor is graded isomorphic to a shifted copy of some $R_A/I_A^\tau$ where $I_A^\tau$ is the toric ideal of one of the faces of the cone spanned by $A$ (the $\tau$ depending on the composition factor). We modify this concept as follows.

\begin{dfn}
An \emph{$\calA$-toric module} is an $\calA$-graded $R_\calA$-module  that has a finite filtration by $\calA$-graded modules such that each composition factor is an $\calA$-graded shifted quotient of  $R_\calA/I_{\calA}$. Note that $\calA$-toric modules are necessarily finitely generated.
\end{dfn}

\medskip

\begin{lem}\label{lem-ttoric}
For any  $\calA$-toric module $M$, the $A$-graded module $\iota_\pi(M)$  has a finite filtration by $A$-graded submodules such that each composition factor is graded isomorphic to a shifted copy of some  $R_A/I^\tau_{A,\rho}$ for suitable $\rho,\tau$ depending on the composition factor.
\end{lem}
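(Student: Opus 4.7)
The plan is to combine the given filtration on the $\calA$-toric module $M$ with the standard graded-prime filtration on $R_A$-modules, and then to identify the primes that can appear via Lemma \ref{lem-Itaurho}. First I would use the very definition of $\calA$-toric module to reduce to the case $M = R_\calA/J$ for a single $\calA$-graded ideal $J \supseteq I_\calA$: the given $\calA$-graded filtration on $M$ remains an $A$-graded filtration after applying $\iota_\pi$, because the $A$-grading is the coarsening of the $\calA$-grading along $\pi$, so every $\calA$-graded submodule is automatically $A$-graded. It is therefore enough to construct the desired refined filtration on each $\iota_\pi(R_\calA/J)$.

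Next I would apply the standard graded-prime filtration to the finitely generated $A$-graded $R_A$-module $R_A/J$: it admits a finite filtration by $A$-graded submodules whose successive quotients are graded-isomorphic to shifted copies of $R_A/\frakp$ for certain $A$-graded prime ideals $\frakp$. Since each such $\frakp$ contains the annihilator of the ambient module, it in particular contains $I_\calA$.

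The substantive step is then to show that every $A$-graded prime $\frakp \supseteq I_\calA$ has the form $I^\tau_{A,\rho}$. Setting $L_\calA := \ker(\calA : \ZZ^n \to N)$, the saturation of $L_\calA$ inside $\ZZ^n$ is $L_A := \ker(A : \ZZ^n \to \bar N)$, and $H := L_A/L_\calA$ is a finite subgroup of the torsion part $F$ of $N$. Writing $I_\calA = I_+(\rho_0)$ for the trivial partial character $\rho_0$ on $L_\calA$, the Eisenbud--Sturmfels theory of binomial lattice ideals gives, in characteristic zero,
\[
\sqrt{I_\calA} \;=\; \bigcap_{\chi \in \widehat{H}} I_+(\widetilde\chi) \;=\; \bigcap_{\chi \in \widehat{H}} I_{A,\chi},
\]
where $\widetilde\chi \in \Hom(L_A, \CC^\ast)$ is the extension of $\rho_0$ whose restriction to $H$ is $\chi$. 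Since $\frakp$ is prime and contains a finite intersection of ideals, $\frakp \supseteq I_{A,\chi}$ for some $\chi \in \widehat{H}$. Lemma \ref{lem-Itaurho} then forces $\frakp = I^\tau_{A,\chi}$ for some face $\tau$, which is of the desired shape.

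The main obstacle I anticipate is the radical identity for $I_\calA$: it is the bridge that converts the torsion data in $N$ into characters on $\ker(A)$ and thereby makes Lemma \ref{lem-Itaurho} applicable. The key computation behind it is the identification of the saturation of $L_\calA$ as $L_A$, which in turn rests on the fact that an element of $\ZZ^n$ lies in the saturation iff some multiple of its $\calA$-image has finite order, iff its $A$-image vanishes. Once that bridge is in place, the remainder is a routine assembly of the standard graded-prime-filtration technique with the Eisenbud--Sturmfels description of lattice ideals.
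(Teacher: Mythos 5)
Your proposal is correct and reaches the same conclusion, but by a genuinely different route from the paper. The paper introduces the auxiliary lattice ideal $\Iell\subseteq I_\calA$ generated by $\ell$-th powers of binomials from $I_A$, factors each $\del^{\ell\boldu}-\del^{\ell\boldv}$ into $\prod_i(\del^\boldu-\zeta_\ell^i\del^\boldv)$, and reads off by hand what the binomial $A$-graded primes containing $\iota_\pi(\Iell)$ look like; it then runs the graded prime filtration over the ring $R_\calA/\Iell$. You instead invoke the Eisenbud--Sturmfels primary decomposition in characteristic zero directly: identifying $\ker(A)$ as the saturation of $\ker(\calA)$, you get $I_\calA=\bigcap_{\chi\in\widehat H}I_{A,\widetilde\chi}$ with $H=\ker(A)/\ker(\calA)$, so any $A$-graded prime containing the annihilator $J\supseteq I_\calA$ of a composition factor contains one of the finitely many $I_{A,\widetilde\chi}$, and Lemma \ref{lem-Itaurho} finishes. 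Your route is shorter and, in one respect, tighter: the paper's filtration step speaks of the primes in the filtration as associated primes of the module over $R_\calA/\Iell$, whereas the standard graded prime filtration only guarantees primes in the support; your argument sidesteps this since you apply Lemma \ref{lem-Itaurho} to \emph{every} $A$-graded prime containing $I_\calA$, not just the binomial or associated ones. What the paper's route buys is a more hands-on identification of the relevant primes, with the Eisenbud--Sturmfels input kept to the statement that associated primes of binomial ideals are binomial, rather than the full primary decomposition theorem for lattice ideals. Both arguments converge on Lemma \ref{lem-Itaurho}, and both must interleave the $\calA$-graded filtration of $M$ with the $A$-graded prime filtrations of the composition factors, which you correctly leave as routine.
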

\begin{proof}
Recall that $\ell=|F|$ is
the torsion index, and introduce the $R_\calA$-ideal
\[
\Iell:=R_\calA\cdot\{\del^{\ell\boldu}-\del^{\ell\boldv}\mid A\cdot\boldu=A\cdot\boldv\}.
\]
Note that this is an $R_\calA$-ideal but the conditions defining it come from $A$.
Since $\ell \bolda\in 0\oplus\ZZ^d\subseteq F\oplus\ZZ^d$ for any $\bolda\in N$, it follows that if $\bolda$ is a linear relation on $A$ then $\ell\bolda$ is a linear relation on $\calA$. In particular, $\Iell\subseteq I_\calA$, and so $\Iell$ is $\calA$-graded and there is an $A$-graded surjection $\iota_\pi(R_\calA/\Iell)\onto \iota_\pi(R_\calA/I_\calA)$.

By definition, $\iota_\pi(\Iell)$ is an $A$-graded binomial ideal in a
polynomial ring, and by \cite{EisenbudSturmfels} all its
associated prime ideals  $\{\frakp_k\}$ are $A$-graded binomial primes.

We study now the binomial $A$-graded prime ideals $\frakp\subseteq R_A$ containing $\iota_\pi(\Iell)$.
For any $\del^\boldu-\del^\boldv\in I_A$ we have $\del^{\ell\boldu}-\del^{\ell\boldv}=\prod(\del^\boldu-\zeta_\ell^{i}\del^\boldv)\in \iota_\pi(\Iell)$, where $\zeta_\ell$ is a primitive  $\ell$-th root of unity, and where $i$ runs through the elements of $\ZZ/\ell\ZZ$. A prime ideal $\frakp$ containing these products will contain at least one factor from each such product, and so for each binomial in $I_A$ such prime ideal $\frakp$ contains that same binomial twisted by a power of $\zeta_\ell$.
On the other hand, by \cite{EisenbudSturmfels}, $\frakp$ is the sum of a binomial prime ideal $I_+(\rho)$ with an ideal $V_\frakp$ of variables, and we can assume that $I_+(\rho)$ contains no monomial of degree one. Suppose one of the binomials $\del^\boldu-\zeta_\ell^i\del^\boldv\in \frakp$ is not in $R_A\cdot V_\frakp.$ Then
neither of its two monomials can be in $\frakp$ since else they would have to be in $V_\frakp$ as $\frakp$ is prime. Thus, such binomial is in $I_+(\rho)$, which then forces $\boldu-\boldv\in L_\rho$. But then $\del^\boldu-\zeta_\ell^i\del^\boldv$ and $\del^\boldu-\rho(\boldu-\boldv)\del^\boldv$ need to agree up to a constant factor, since both are in $\frakp$ (and hence so are their linear combinations). It follows that for each $\boldu-\boldv\in\ker(A)$, either both $\del^\boldu,\del^\boldv$ are in $V_\frakp$, or $I_+(\rho)$ contains a $\del^\boldu-\zeta_\ell^i\del^\boldv$ with $i$ depending on $\boldu,\boldv$.

Pick an extension $\rho'$ of $\rho$ to $\ZZ^n$, which exists since $\frakp$ is prime and hence $\rho$ is saturated. Then the monomial automorphism  of $R_A$ from Remark \ref{rmk-extend-rho} that sends $\del^\boldu$ to $\rho'(\boldu)\cdot \del^\boldu$ sends $V_\frakp$ to itself and $V_\frakp+I_+(\rho)$ to an $A$-homogeneous prime ideal generated by variables and binomials containing $I_A$. The only such ideals  are, by Lemma \ref{lem-Itaurho} the ideals $I^\tau_A$, and so $\frakp=I^\tau_{A,\rho''}$ for some character $\rho''$ on $L$, and some face $\tau$ of $\NN A$. In particular, the associated primes of $\iota_\pi(R_\calA/\Iell)$ are all of the form $I^\tau_{A,\rho''}$ for suitable $\tau,\rho''$.

\smallskip

Suppose now that $M$ is an arbitrary $\calA$-toric module, and choose one of its composition factors $\tilde M_i:=M_i/M_{i-1}$, an $\calA$-graded shifted quotient of $R_\calA/I_\calA$.
Then $\iota_\pi(\tilde M_i)$ is an $A$-graded quotient of $\iota_\pi(R_\calA/\Iell)$.

It is well-known that any finitely generated ($\bar N$-graded) module $\tilde M$ over any ($\bar N$-graded) Noetherian ring $R$ permits a finite filtration by ($\bar N$-graded) modules such that each composition factor is, up to a shift in the grading, of the form $R/\frakp$ where $\frakp$ is an associated ($\bar N$-graded) prime ideal of $\tilde M$ over $R$. We apply this to $R=\iota_\pi(R_\calA/\Iell)$ and $\tilde M=\iota_\pi(\tilde M_i)$. This means that $\iota_\pi(\tilde  M_i)$ has an $A$-graded filtration such that its composition factors $\tilde M_{i,j}$ are, up to a shift in the grading, of the form $R_A/\frakp_{i,j}$ where each $\frakp_{i,j}$ is an $A$-graded prime containing $\iota_\pi(\Iell)$.
By what we have proved earlier, each $\frakp_{i,j}$ is  thus of the form $I^\tau_{A,\rho}$ for some face of $\NN A$ and some character on $L$, and hence each $\tilde M_{i,j}$ has the claimed property.

It is now standard to assemble a composition chain for $M$ from these composition chains of its composition factors.
\end{proof}

Inspired be this lemma, we make the following definition.
\begin{dfn}
An $A$-graded $R_A$-module $M$ is \emph{$A$-twisted-toric}, or short just \emph{$A$-ttoric}, if it has a finite filtration such that each composition factor is $A$-graded isomorphic to a shifted copy of $R_A/I^\tau_{A,\rho}$ for some face $\tau$ of $A$ and some character $\rho$ on the (saturated) lattice $\RR\tau\cap N$.
\end{dfn}
The image under $\iota_\pi$ of any $\calA$-toric module $M$ is $A$-ttoric by Lemma \ref{lem-ttoric}.

\section{Euler--Koszul Technology}

We review from \cite{MMW05} the following concepts and constructions.

If $m\in M$ is an $A$-homogeneous element of $A$-degree $\boldu=\sum u_i\boldeps_i$ inside an $A$-graded module $M$,  set $\deg_{A,i}(m):=u_i$.
The \emph{true $A$-degrees} $\tdeg_A(M)$ of an $A$-graded module $M=\bigoplus_{\boldu\in \bar N}M_\boldu$ are the collection $\{\boldu\in \bar N\mid M_\boldu\neq 0\}$. As a subset of $\bar N_\CC=\bigoplus \CC\cdot \boldeps_i$, the Zariski closure of $\tdeg_A(M)$ is the set of \emph{$A$-quasi-degrees} $\qdeg_A(M)$. For a toric $A$-module, the quasi-degrees are a finite union of subspaces of the form $\CC\cdot(A\cap \tau)+\boldu$ for faces $\tau$ of $A$ and suitable shift vectors $\boldu\in\bar N$.

For $1\le i\le d$, let
\[
E_i=\sum_{j=1}^n a_{i,j}x_j\del_j\in D_A
\]
be the $i$-th \emph{Euler operator}. Then
$\deg_A(E_i)=0$
and we have the commutator relation
\[
[E_i,P]=-\deg_{A,i}(P)\cdot P
\]
for all $A$-homogeneous $P\in D_A$.

For $M$ any $A$-graded $R_A$-module, and for all $\beta\in L\otimes_\ZZ\CC=\bigoplus_1^d \CC\cdot\boldeps_i$, one can define
an endomorphism (denoted $(E_i-\beta)\bullet$) of left $D_A$-modules on $D_A\otimes_{R_A}M$ by setting
\[
(E_i-\beta)\bullet(P\otimes m):=(E_i+\deg_{A,i}(P)+\deg_{A,i}(m)-\beta_i))(P\otimes m).
\]

If $M=R_A$ with $A$-grading that places $1\in R_A$ into degree zero, this left $D_A$-linear endomorphism on $D_A\otimes_{R_A}R_A=D_A$  is right-multiplication by $E_i-\beta_i$ on $D_A$ as one checks from the commutator relations.

The endomorphisms $(E_i-\beta_i)\bullet$ and $(E_{i'}-\beta_{i'})\bullet$ commute for all $i,i'$ and on all $A$-graded modules $D_A\otimes_{R_A} M$, and thus one can define the Koszul complex on the endomorphisms
\[
\left(
\begin{tikzcd}
0\to D_A\otimes_{R_A}M\arrow{rr}{(E_i-\beta_i)\bullet}&&D_A\otimes_{R_A}M\to 0
\end{tikzcd}\right).
\]

The resulting complex $K_{A,\bullet}(M;\beta)$ (well-defined up to ordering the factors in the tensor product) is the \emph{Euler--Koszul complex} to $M$ and $\beta$ and we view it as positioned in such a way that the terminal module $D_A\otimes M$ of this complex is situated in homological degree zero. We denote the $i$-th homology module by $H_{A,i}(M;\beta)$.

One of the main results in \cite{MMW05} is that every homology group of $K_{A,\bullet}(M,\beta)$ is a holonomic left $D_A$-module, provided that $M$ is an $A$-toric $R_A$-module. We prove next a generalization of this fact to $A$-ttoric modules.

\begin{thm}\label{thm-ttoric-EK}
If $M$ is $A$-ttoric, then for every $\beta\in N_\CC$, the Euler--Koszul complex $K_{A,\bullet}(M,\beta)$ is holonomic (in the sense that each of its homology groups is a holonomic $D_A$-module). Moreover, $K_{A,\bullet}(M;\beta)$ is exact  (has zero homology) if and only if $\beta\notin\qdeg_A(M)$.
\end{thm}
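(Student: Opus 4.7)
The plan is to mirror the structure of the toric case in \cite{MMW05} by using two main reductions: first, reduce from arbitrary $A$-ttoric $M$ to the composition factors $R_A/I^\tau_{A,\rho}$ provided by the defining filtration; second, reduce each $R_A/I^\tau_{A,\rho}$ to the corresponding toric quotient $R_A/I^\tau_A$ via a monomial automorphism of $D_A$, and then invoke \cite{MMW05}.

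For the first reduction, recall that $D_A$ is free over $R_A$ by PBW, so $D_A\otimes_{R_A}-$ is exact, and an $A$-graded short exact sequence $0\to M'\to M\to M''\to 0$ of ttoric modules yields a short exact sequence of Euler--Koszul complexes. Passing to homology gives a long exact sequence
\[
\cdots\to H_{A,i}(M';\beta)\to H_{A,i}(M;\beta)\to H_{A,i}(M'';\beta)\to H_{A,i-1}(M';\beta)\to\cdots.
\]
Since holonomicity is preserved under extensions, and since $\qdeg_A$ is additive over $A$-graded short exact sequences in the sense that $\qdeg_A(M)=\qdeg_A(M')\cup\qdeg_A(M'')$, both the holonomicity claim and the direction ``$\beta\notin\qdeg_A(M)\Rightarrow K_{A,\bullet}(M;\beta)$ exact'' propagate along the ttoric filtration by induction on its length, once they are known for the composition factors $R_A/I^\tau_{A,\rho}$.

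For the second reduction, I would use the extension $\rho'$ of $\rho$ to a character on $L=\ZZ^n$ and the monomial isomorphism from Remark \ref{rmk-extend-rho}. I will lift $\rho^*$ to a $\CC$-algebra automorphism of $D_A$ by declaring $\rho^*(\del_j)=\rho'(\bolde_j)\del_j$ and $\rho^*(x_j)=\rho'(\bolde_j)^{-1}x_j$; this preserves $[x_j,\del_j]=-1$ and the $A$-grading, and a direct calculation gives $\rho^*(E_i)=E_i$ for all $i$. The computation in the proof of Lemma \ref{lem-ttoric} shows $\rho^*(I^\tau_{A,\rho})=I^\tau_A$, so $\rho^*$ identifies $R_A/I^\tau_{A,\rho}$ with the $A$-toric module $R_A/I^\tau_A$ in an $A$-graded way that commutes with the endomorphisms $(E_i-\beta_i)\bullet$. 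Hence the Euler--Koszul complex $K_{A,\bullet}(R_A/I^\tau_{A,\rho};\beta)$ is isomorphic to $K_{A,\bullet}(R_A/I^\tau_A;\beta)$ as a complex of $D_A$-modules up to this automorphism; in particular, each homology group is holonomic, and since $\qdeg_A(R_A/I^\tau_{A,\rho})=\CC\tau=\qdeg_A(R_A/I^\tau_A)$, exactness is characterized by $\beta\notin\CC\tau$ via the corresponding statement in \cite{MMW05}.

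The direction that I expect to be the main obstacle is the converse: if $\beta\in\qdeg_A(M)$, then $K_{A,\bullet}(M;\beta)$ has non-zero homology. The naive inductive argument only gives one inclusion. Following \cite{MMW05}, I would handle this by choosing the ttoric filtration so that, among all composition factors with $\beta\in\qdeg_A$ of that factor, one appears as a submodule $M'\subseteq M$; concretely, filter $M$ by support of associated faces, picking the largest face $\tau$ with $\beta\in\CC\tau$ at the bottom. The long exact sequence combined with the fact that $H_0(R_A/I^\tau_{A,\rho};\beta)\neq 0$ (since $\beta\in\qdeg_A$ of this factor, by the monomial-automorphism identification and the toric case) should then give non-vanishing $H_0(M;\beta)\neq 0$, or at worst non-vanishing in some positive degree. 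Verifying that the induced map on $H_0$ from $M'$ into $M$ does not die against $M/M'$ is the technical point; in the toric case this is dealt with by a careful analysis of $\qdeg_A$ over the filtration, and the same analysis, now applied to the faces $\tau$ attached to the composition factors, should go through because $\qdeg_A(R_A/I^\tau_{A,\rho})$ depends only on $\tau$, not on $\rho$.
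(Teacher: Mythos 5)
Your proof follows the same skeleton as the paper's: reduce via the long exact sequence of Euler--Koszul homology to the composition factors $R_A/I^\tau_{A,\rho}$, transport these to the untwisted toric quotients $R_A/I^\tau_A$ via a monomial automorphism, and invoke \cite{MMW05}. Your automorphism step is a valid (and arguably cleaner) variant of the paper's: the paper first restricts to the face $\tau$, passing to a submatrix $B$, and only then applies the automorphism at the level of $D_B$ to land on a classical GKZ-system; you instead apply $\rho^*$ on all of $D_A$ at once, using $\rho^*(E_i)=E_i$ and $\rho^*(I^\tau_{A,\rho})=I^\tau_A$. One caveat worth stating explicitly: since $\rho^*$ is a ring automorphism of $D_A$, the induced map $D_A/D_A I^\tau_{A,\rho}\to D_A/D_A I^\tau_A$ is only $\rho^*$-semilinear, so the precise assertion is that the two Euler--Koszul complexes are identified by the twist autoequivalence $N\mapsto{}^{\rho^*}\!N$ on left $D_A$-modules; this preserves holonomicity and the $A$-grading, so the conclusion is unaffected.

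The step you flagged as the obstacle is indeed a gap as written. Placing a composition factor $M'$ with $\beta\in\qdeg_A(M')$ at the bottom of the filtration and examining the long exact sequence
\[
\cdots\to H_{A,1}(M/M';\beta)\to H_{A,0}(M';\beta)\to H_{A,0}(M;\beta)\to\cdots
\]
does not by itself give $H_{A,0}(M;\beta)\neq 0$: nothing prevents the connecting map from being surjective, and the long exact sequence alone yields only the easy implication. The argument that actually closes this --- used in \cite{MMW05} and appearing in the paper as Lemma \ref{lem-rank-0} --- is via rank: $\rank H_{A,0}(-;\beta)$ is additive on short exact sequences of ttoric modules and is positive precisely when $\beta$ is a quasi-degree (proved for the composition factors via the automorphism, which gives $\rank H_{A,0}(R_A/I^\tau_{A,\rho};\beta)=\rank H_{A,0}(R_A/I^\tau_A;\beta)$, and then by additivity). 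Since $\rank H_{A,0}(M;\beta)\geq\rank H_{A,0}(M';\beta)>0$, one obtains $H_{A,0}(M;\beta)\neq 0$. Replacing your filtration maneuver by this rank argument completes the proposal.
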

\begin{proof}
We review the cornerstones of the proof in the $A$-toric case from \cite{MMW05} and indicate the necessary changes for the $A$-ttoric case.

If $0\to M'\to M\to M''\to 0$ is an $A$-graded exact sequence of $A$-toric modules, the Euler--Koszul functor induces a long exact sequence of Euler--Koszul homology modules. Basic properties of holonomic modules imply that $H_{A,\bullet}(M;\beta)$ is holonomic if and only if both $H_{A,\bullet}(M',\beta)$ and $H_{A,\bullet}(M'';\beta)$ are holonomic. The definition of $A$-toric modules implies that holonomicity of $K_{A,\bullet}(M;\beta)$ follows from holonomicity of all $K_{A,\bullet}(R_A/I^\tau_A;\beta)$.

 The quasi-degrees of $R_A/I^\tau_A$ are exactly $\CC\cdot(A\cap \tau)$. One shows next that $K_{A,\bullet}(R_A/I^\tau_A;\beta)$ is an exact complex if and only if $\beta$ is not a quasi-degree of $R_A/I^\tau_A$. Since the construction of an Euler--Koszul complex is designed to work well with degree shifts, it follows that $K_{A,\bullet}(M;\beta)$ is exact if and only if $\beta\in\qdeg_A(M)$.

Let now $\beta\in\qdeg_A(R_A/I^\tau_A)=\CC\cdot(A\cap\tau)$.  The main (\emph{i.e.}, terminal) homology group $H_{A,0}(R_A/I^\tau_A;\beta)$ is holonomic since it can be shown to be isomorphic to the $D$-module inverse image under a projection, from $\bar N_\CC$ to a quotient space  of $\bar N_\CC$, of a standard GKZ-system on that quotient space. (More precisely, $H_{A,0}(R_A/I^\tau_A;\beta)$  is of the form $H_{B,0}(R_B/I_B;\gamma)\otimes_\CC\CC[y_1,\ldots,y_k]$ where $B$ is a full rank matrix with the same span as $\tau\cap A$ and $k$ fewer columns than $A$,
compare the proof of \cite[Lemma~4.9]{MMW05}). This then implies that all $H_{A,0}(M;\beta)$ are holonomic for toric $M$. The higher Euler--Koszul homologies of $R_A/I^\tau_A$ are the cosets of $A$-homogeneous elements $m$ in the kernel of the Euler--Koszul complex. Their classes are annihilated by each $(E_i-\beta-\deg(m))\bullet$, and also by a power of $I^\tau_A$ (compare the proof of \cite[Prop.~5.1]{MMW05}). Using a filtration argument, they are a quotient of a finite direct sum of various $H_{A,0}(R_A/I^\tau_A;\gamma)$ and hence holonomic.

Inspecting this proof strategy shows that in order to adapt this proof to the $A$-ttoric case, one just needs to know that $H_{A,0}(R_A/I^\tau_{A,\rho};\beta)$ is holonomic, and that furthermore it is nonzero if and only if $\beta$ is an $A$-quasi-degree of $R_A/I^\tau_{A,\rho}$.

Let $M=R_A/I^\tau_{A,\rho}$. If $\beta$ lies outside $\CC(A\cap\tau)=\qdeg_A(M)$, then the $\CC$-linear span of $E-\beta$ together with  the
products $\{x_j \del_j \mid \bolda_j\notin\tau\}$
is a subset of $D_A\cdot I^\tau_{A,\rho}$ and  contains a nonzero scalar (compare \cite[Lemma~4.9]{MMW05}); hence, $H_{A,0}(M;\beta)=D_A/D_A(I^\tau_{A,\rho},\{E_i-\beta_i\})$ is zero. As in the untwisted case, this implies the vanishing of all higher homology groups. If, on the other hand,  $\beta$ is in the span of $A\cap\tau$ then choose $\dim(\tau)$ many rows of $A$ such that their restriction to the columns in $\tau$ of these rows are linearly independent. Then write $B$ for  the corresponding $\rank(A\cap\tau)\times |A\cap\tau|$-submatrix, denote $\beta^\tau$ the restriction of $\beta$ to the column span of $B$, and
set $E^\tau_i:=\sum_{j\in \tau}a_{i,j}x_j\del_j$.
If now $\rho^B$ is the restriction of $\rho$ to $\ker(B)\subseteq \ker(A)$, it follows that $H_{A,0}(M;\beta)=(D_B/D_B\cdot(I_{B,\rho^B},\{E_i^\tau-\beta_i^\tau\}))\otimes_\CC \CC[\{x_j\mid\bolda_j\notin\tau\}]$. The partial saturated character $\rho^B$ can be used to induce an automorphism of ($R_B$ as in Remark \ref{rmk-extend-rho}, and hence also on) the Weyl algebra $D_B$ that carries $D_B/D_B\cdot(I_{B,\rho^\rho},\{E_i^\tau-\beta_i^\tau\})$ to the classical GKZ-system $H_{B,0}(R_B/I_B;\beta^\tau)$. The latter is, of course, nonzero.
\end{proof}

\begin{rmk}
Holonomicity of $H_{A,0}(R_A/I^\tau_{A,\rho};\beta)$ was already established by Dickenstein, Matusevich and Miller. Indeed,
our setup here  ensures that the underlying binomial ideal is toral in the sense of \cite{DMM-Andean}; see specifically Lemma 3.4 and Theorem 4.5.
\end{rmk}

The \emph{rank} of a $D_A$-module $\scrM$ is the vector space dimension of its classical solution space near a generic point of $\CC^n$. The Cauchy--Kovalevskaya--Kashiwara Theorem shows that this is also the dimension over $\CC(\bsx)$ of the vector space $\CC(\bsx)\otimes_{\CC[\bsx]} \scrM$. For modules such as $H_{A,0}(M;\beta)$ that arise from toric modules $M$, it is shown in \cite{MMW05} that the rank is positive precisely when $\beta$ is a quasi-degree of $M$. This generalizes to the $A$-ttoric case.
\begin{lem}\label{lem-rank-0}
The parameter $\beta$ is a quasi-degree of the $A$-ttoric module $M$ precisely if $H_{A,0}(M;\beta)$ has positive rank. More precisely, the rank function is upper-semicontinuous in $\beta$.
\end{lem}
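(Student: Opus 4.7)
The plan is to address the two assertions separately. The equivalence between positive rank and $\beta\in\qdeg_A(M)$ will be obtained by induction on the length of a ttoric filtration, reducing to the composition factors $R_A/I^\tau_{A,\rho}$, where the explicit isomorphism with a classical GKZ-system tensored by a free polynomial ring (established inside the proof of Theorem \ref{thm-ttoric-EK}) applies. Upper-semicontinuity will be deduced from a universal Euler--Koszul construction with $\beta$ as a free parameter.

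If $\beta\notin\qdeg_A(M)$, then Theorem \ref{thm-ttoric-EK} makes $K_{A,\bullet}(M;\beta)$ exact, so $H_{A,0}(M;\beta)=0$ and its rank vanishes. For the converse, I would induct on the length $N$ of a ttoric filtration $0=M_0\subsetneq\cdots\subsetneq M_N=M$ whose composition factors $\tilde M_i=M_i/M_{i-1}$ are shifted copies of some $R_A/I^\tau_{A,\rho}$. In the base case $N=1$, the identification
\[
H_{A,0}(R_A/I^\tau_{A,\rho};\beta)\simeq H_{B,0}(R_B/I_B;\beta^\tau)\otimes_\CC\CC[\{x_j\mid\bolda_j\notin\tau\}]
\]
recorded at the end of the proof of Theorem \ref{thm-ttoric-EK} reduces positivity to the classical case: the rank of $H_{B,0}(R_B/I_B;\beta^\tau)$ at $\beta^\tau\in\qdeg_B(R_B/I_B)$ is positive by \cite{MMW05}, and tensoring with the polynomial ring in the extra variables $x_j$ (on which $\del_j$ act as standard derivations) preserves positivity of the $\CC(\bsx)$-dimension. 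For $N>1$, apply the long exact Euler--Koszul sequence to $0\to M_{N-1}\to M\to \tilde M_N\to 0$. If $\beta\in\qdeg_A(\tilde M_N)$, the surjection $H_{A,0}(M;\beta)\twoheadrightarrow H_{A,0}(\tilde M_N;\beta)$ combined with additivity of $\CC(\bsx)$-rank on short exact sequences of holonomic $D$-modules forces $\rank H_{A,0}(M;\beta)\geq\rank H_{A,0}(\tilde M_N;\beta)>0$. If instead $\beta\in\qdeg_A(M_{N-1})\setminus\qdeg_A(\tilde M_N)$, Theorem \ref{thm-ttoric-EK} gives $H_{A,\bullet}(\tilde M_N;\beta)=0$, so $H_{A,0}(M_{N-1};\beta)\cong H_{A,0}(M;\beta)$ and the inductive hypothesis applies.

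For upper-semicontinuity, form the universal Euler--Koszul cokernel: letting $\widetilde D:=D_A[\beta_1,\ldots,\beta_d]$, set
\[
\calH:=\bigl(\widetilde D\otimes_{R_A}M\bigr)\Big/\textstyle\sum_{i=1}^{d}(E_i-\beta_i)\bullet\bigl(\widetilde D\otimes_{R_A}M\bigr).
\]
Since $M$ is finitely generated over $R_A$, the $\CC(\bsx)[\beta]$-module $\calH_{\CC(\bsx)}:=\CC(\bsx)\otimes_{\CC[\bsx]}\calH$ is finitely generated. Right-exactness of the specialization $(-)\otimes_{\CC[\beta]}\CC[\beta]/(\beta-\beta^{(0)})$ identifies its fiber at $\beta^{(0)}\in\CC^d$ with $\CC(\bsx)\otimes_{\CC[\bsx]}H_{A,0}(M;\beta^{(0)})$, whose $\CC(\bsx)$-dimension equals $\rank H_{A,0}(M;\beta^{(0)})$. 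The classical upper-semicontinuity of fiber dimensions for finitely generated modules over the Noetherian ring $\CC(\bsx)[\beta]$ then yields the claim.

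The main obstacle I anticipate is the reverse direction of the rank-positivity equivalence: positivity cannot simply be propagated up the filtration by surjections, because the map $H_{A,0}(M_i;\beta)\to H_{A,0}(M;\beta)$ may fail to be injective. The cleanest resolution is the dichotomy above, which uses Theorem \ref{thm-ttoric-EK} to kill contributions from composition factors whose quasi-degrees miss $\beta$ and reduces the remaining inductive step to additivity of rank on short exact sequences of holonomic $D$-modules.
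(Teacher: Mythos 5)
Your proof of the equivalence is correct, and the dichotomy-based induction is a genuinely different (and in one respect cleaner) route than the paper's. The paper disposes of the reduction to the composition factors $R_A/I^\tau_{A,\rho}$ in one line by invoking that ``rank is additive in short exact sequences''; this implicitly relies on knowing that the higher Euler--Koszul homologies $H_{A,i}(-;\beta)$, $i>0$, of $A$-ttoric modules have rank zero (otherwise the long exact sequence only gives an inequality). Your dichotomy --- either $\beta\in\qdeg_A(\tilde M_N)$ and you use the surjection onto $H_{A,0}(\tilde M_N;\beta)$, or $\beta\notin\qdeg_A(\tilde M_N)$ and you use the full vanishing from Theorem \ref{thm-ttoric-EK} to get $H_{A,0}(M_{N-1};\beta)\cong H_{A,0}(M;\beta)$ --- sidesteps that issue entirely; you only need that localization at the generic point preserves surjections. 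Once reduced to $R_A/I^\tau_{A,\rho}$, both you and the paper transport the problem to the classical $R_A/I^\tau_A$ case via the monomial automorphism from Remark \ref{rmk-extend-rho} (which you cite implicitly via the identification recorded inside the proof of Theorem \ref{thm-ttoric-EK}).

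On upper-semicontinuity you follow essentially the same route as the paper (a holonomic family over $\CC[\beta]$, then specialization), but there is a gap in your key step: you assert that ``since $M$ is finitely generated over $R_A$, the $\CC(\bsx)[\beta]$-module $\calH_{\CC(\bsx)}$ is finitely generated.'' That does not follow. Finite generation of $M$ over $R_A$ only gives $\calH_{\CC(\bsx)}$ finitely generated over $\CC(\bsx)\langle\del\rangle[\beta]$; descending finite generation to the commutative subring $\CC(\bsx)[\beta]$ is exactly the content of showing that the Euler--Koszul cokernel is a \emph{holonomic family} in the sense of \cite{MMW05}, which is a substantive theorem and is precisely what the paper cites. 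You should either cite that result directly or extend its proof to the $A$-ttoric setting (e.g.\ via the filtration by $R_A/I^\tau_{A,\rho}$ and the Remark \ref{rmk-extend-rho} automorphism), rather than present it as an immediate consequence of $R_A$-finiteness.
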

\begin{proof}
Since rank is additive in short exact sequences, it suffices to prove the statement when $M=R_A/I^\tau_{A,\rho}$ for some face $\tau$ and some character $\rho$ on $\ker(A)$. In that case, the automorphism on $R_A$ arising from Remark \ref{rmk-extend-rho}, and the induced automorphism of $D_A$ (that acts on $x_j$ inversely to the action on $\del_j$) reflects the issue from  $\CC(\bsx)\otimes_{R_A}\scrM$ back to $H_{A,0}(R_A/I^\tau_A;\beta)$, in which case the result is known from work of Gel'fand and his collaborators, and of Adolphson \cite{GGZ87,GKZ89,Adolphson-duke94}.

The proof of the final claim follows the lines of \cite{MMW05} since $A$-ttoric modules give holonomic families.
\end{proof}

\begin{cor}
For any $A$-ttoric module $M$, there is a subspace arrangement $\calE_M$ in $\CC^d$ such that $\beta \notin\calE_M$ implies that the rank of $H_{A,0}(M;\beta)$ is independent of $\beta$.
\end{cor}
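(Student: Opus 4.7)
The plan is to reduce the statement to composition factors of a ttoric filtration of $M$, and for each factor to transport the question to the classical toric case handled in \cite{MMW05}. The exceptional set $\calE_M$ will be the union of the exceptional sets arising from these two reductions.

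First, fix an $A$-ttoric filtration of $M$ with composition factors of the form $R_A/I^\tau_{A,\rho}$. Any short exact sequence $0 \to M' \to M \to M'' \to 0$ of $A$-ttoric modules produces, via the Euler--Koszul functor, a long exact sequence of $D_A$-modules, and rank (being the $\CC(\bsx)$-dimension after localization) is additive in short exact sequences of $D_A$-modules. Hence, outside the locus where any higher Euler--Koszul homology of the modules in the filtration is non-zero, the rank of $H_{A,0}(M;\beta)$ equals the sum of the ranks on the composition factors. By Theorem \ref{thm-ttoric-EK} these higher homologies are holonomic, and the argument of \cite[Section 6]{MMW05}, extended as in the proof of Lemma \ref{lem-rank-0} to ttoric modules (which give holonomic families in $\beta$), shows that the locus where higher homology is non-zero is contained in a subspace arrangement.

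Second, for a single composition factor $M' = R_A/I^\tau_{A,\rho}$, the rank of $H_{A,0}(M';\beta)$ vanishes outside $\qdeg_A(M') = \CC\cdot (A\cap\tau)$ by Theorem \ref{thm-ttoric-EK}, hence is trivially constant there. Inside this subspace, extend $\rho$ to a character $\rho'$ on $L$ as in Remark \ref{rmk-extend-rho}; the induced monomial automorphism of $R_A$ lifts to an automorphism of $D_A$ (via the inverse action on the $x_j$) which carries $D_A/D_A\cdot(I^\tau_{A,\rho}, \{E_i-\beta_i\})$ isomorphically onto $D_A/D_A\cdot(I^\tau_A, \{E_i-\beta_i\}) = H_{A,0}(R_A/I^\tau_A;\beta)$, as in the final paragraph of the proof of Theorem \ref{thm-ttoric-EK}. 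Since this automorphism commutes with $\beta$ and preserves rank, the ttoric rank function agrees with the classical one, and the latter is constant on $\CC\cdot(A\cap\tau)$ off an explicit subspace arrangement by \cite[Theorem~7.5]{MMW05}.

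Finally, $\calE_M$ is taken to be the union of the two arrangements identified above, one from the higher Euler--Koszul homology vanishing, and one from the collection of composition factors; this is a finite union of affine subspaces of $\CC^d$. The main obstacle is the first step: upper-semicontinuity as stated in Lemma \ref{lem-rank-0} concerns only $H_{A,0}$, so one must verify that the higher Euler--Koszul homologies of ttoric modules also assemble into holonomic families in $\beta$, in order to conclude that their supports in the parameter space are subspace arrangements. This follows the strategy of \cite{MMW05} essentially verbatim, using that the automorphism from Remark \ref{rmk-extend-rho} identifies every relevant complex with a classical Euler--Koszul complex.
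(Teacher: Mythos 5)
Your proposal takes essentially the same route as the paper: reduce to composition factors of a ttoric filtration, apply the monomial automorphism from Remark \ref{rmk-extend-rho} to identify the rank of $H_{A,0}(R_A/I^\tau_{A,\rho};\beta)$ with that of $H_{A,0}(R_A/I^\tau_A;\beta)$, and invoke the subspace-arrangement rank-constancy result from \cite{MMW05}. You are more explicit than the paper's terse proof about why the reduction to composition factors is legitimate (controlling the vanishing locus of higher Euler--Koszul homology via the holonomic-family argument), but the two key pillars---filtration reduction and the twist automorphism---are identical.
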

\begin{proof}
Following the train of thought in the proof of Lemma \ref{lem-rank-0}, one needs to inspect the statement of the lemma only when $M=R_A/I^\tau_{A,\rho}$. But because of the isomorphism induced via Remark \ref{rmk-extend-rho}, the rank of $H_{A,0}(R_A/I^\tau_{A,\rho};\beta)$ is the same as that of $H_{A,0}(R_A/I_A^\tau;\beta)$, and so one may once again refer to \cite{MMW05}.
\end{proof}

\section{$D$-modules from Modules over Semigroups}

In the next  section we will review the definition of the systems introduced by Borisov and Horja in \cite{BorHor-Mellin, BorHor-MathAnn}, show that they arise as terminal homology groups of an Euler--Koszul complex on a suitable ttoric $A$-module, and deduce finiteness conditions about it. Here, we lay some ground work regarding modules over semigroups.

\subsection{Semigroup Modules}

\begin{cnv}
Throughout, we will assume that our semigroups are subsemigroups of finitely generated abelian groups such that the image in the torsion-free part is pointed.
\end{cnv}
If $S$ is any semigroup, then a set $T$ is an
\emph{$S$-module} if there is an action
\[
\bullet\colon S\times T\to T
\]
that is additive in $S$. We call $T$ \emph{finitely generated (over $S$)} if
$T=\bigcup S\bullet t$ with $t$ running through some finite subset of
$T$. Let $S_+$ be the non-units in $S$ and remark that there are only finitely many units in $S$ (since the torsion is finite and $\pi(S)$ is pointed).

We refer to
\[
\Tprim:=T\minus(S_+\bullet T)
\]
as the \emph{($S$-)primitive elements of $T$}; this is the natural candidate set for a ``collection of minimal generators of $T$ over $S$".

\begin{lem}\label{lem:PrimFinitGen}
Suppose $S$ is a finitely generated semigroup and write $S_+$ for the non-invertible elements of $S$. Suppose $T=\bigcup_1^k S\bullet t_i$ is a finitely generated $S$-module, such that $s\bullet t=t$ only occurs when $s$ is a unit. Then the set $\Tprim:=T\minus S_+\bullet T$ is finite and generates $T$ over $S$.
\end{lem}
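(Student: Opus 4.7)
The plan is to handle finiteness of $\Tprim$ and generation of $T$ by $\Tprim$ separately.

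\textbf{Finiteness of $\Tprim$.} For any $t\in\Tprim$, finite generation of $T$ gives $t=s\bullet t_i$ for some $i$ and $s\in S$. If $s\in S_+$ we would have $t\in S_+\bullet T$, contradicting primitivity; so $s\in U(S)$. Hence $\Tprim\subseteq\bigcup_{i=1}^k U(S)\bullet t_i$, which is finite because $U(S)$ is finite (as the remark preceding the lemma records, using that $\pi(S)$ lies in a pointed cone and the torsion of $N$ is finite).

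\textbf{Generation.} I would prove $T=S\bullet\Tprim$ by showing well-foundedness of the relation ``$t'\prec t$ iff $t=s\bullet t'$ for some $s\in S_+$''. If this failed, then for some $t^\circ\notin S\bullet\Tprim$ one could iteratively choose $t^\circ=t_0\succ t_1\succ t_2\succ\cdots$ with $t_n=s_n\bullet t_{n+1}$, $s_n\in S_+$, each $t_n\notin S\bullet\Tprim$ (since $S\bullet\Tprim$ is stable under the $S$-action). Pointedness of $K_\RR$ furnishes a $\ZZ$-linear functional $\phi\colon\bar N\to\ZZ$ strictly positive on $\pi(S)\minus\{0\}$, hence with $\phi(\pi(s))\ge 1$ for every $s\in S_+$. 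Writing $t_n=\sigma_n\bullet t_{j_n}$ and telescoping, one obtains $t^\circ=r_n\bullet t_{j_n}$ with $r_n:=s_0+\cdots+s_{n-1}+\sigma_n\in S$ and $\phi(\pi(r_n))\ge n$. By pigeonhole, pass to a subsequence on which $j_n$ equals a fixed $j^\ast$, so that $r_n\bullet t_{j^\ast}=t^\circ$ for all $n$.

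The crucial step is a Dickson-style argument. Fixing semigroup generators of $S$ produces a surjection $\NN^r\onto S$; lift the $r_n$ to $\NN^r$ and extract a further subsequence $(r_{n_\ell})$ whose lifts are componentwise nondecreasing. Then $w_\ell:=r_{n_{\ell+1}}-r_{n_\ell}$ lies in $S$, and from $r_{n_\ell}\bullet t_{j^\ast}=r_{n_{\ell+1}}\bullet t_{j^\ast}=t^\circ$ we obtain $w_\ell\bullet t^\circ=t^\circ$. The free-stabilizer hypothesis forces $w_\ell\in U(S)$, so $\phi(\pi(w_\ell))=0$ and $\phi(\pi(r_{n_\ell}))$ is constant in $\ell$, contradicting $\phi(\pi(r_n))\to\infty$.

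\textbf{Main obstacle.} The delicate step is this Dickson move: one must arrange for the \emph{differences} $w_\ell$ to land back in $S$ (not merely in the Grothendieck group), since only then does the free-stabilizer hypothesis apply and collapse the $\phi$-height along the chain. Both ingredients—pointedness of $\pi(S)$ (to produce $\phi$) and the free-stabilizer hypothesis (to make $w_\ell$ a unit)—are essential; without either, the descending chain could persist.
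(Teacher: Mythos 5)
Your proof is correct, and while it shares the skeleton of the paper's argument (the finiteness of $\Tprim$ is handled identically, and both generation arguments boil down to a Dickson-type monotone-subsequence extraction feeding into the free-stabilizer hypothesis), the mechanism in the generation step is genuinely different. The paper first quotients out the finite unit group to assume $U$ trivial, then introduces a descending ``level'' filtration ($\level(t)\ge h$ iff $t$ lies in the image of $h$-fold $S_+$-iterates) and proves every element has finite height by a hand-rolled pigeonhole on how often each fixed generator of $S$ appears in iterated expressions of $t$; the contradiction arises from a sum of generators acting trivially. You instead keep the units around, run an explicit descending $\prec$-chain out of $T\setminus S\bullet\Tprim$, and measure descent with a $\ZZ$-linear functional $\phi$ supplied by pointedness of $K_\RR$, obtaining $\phi(\pi(r_n))\ge n$; Dickson's lemma then manufactures a nonnegative difference $w_\ell$ with $w_\ell\bullet t^\circ=t^\circ$, which is a unit and hence $\phi$-trivial, giving the contradiction. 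Your route is tighter in two respects: invoking Dickson's lemma outright avoids the paper's slightly imprecise ``strictly dominates'' bookkeeping, and the identity $w_\ell\bullet t^\circ = (w_\ell+r_{n_\ell})\bullet t_{j^\ast}=t^\circ$ is a direct computation needing no cancellation of the $T$-action. The trade-off is that your argument leans explicitly on the concrete embedding (you need $\pi$ and pointedness to build $\phi$), whereas the paper's core combinatorics after the unit reduction are abstract; in the paper's intended setting $S=\NN\calA$ this is no loss, since the finiteness-of-units hypothesis that both proofs invoke already comes from the same source. One small point you should spell out: $\phi(\pi(s))\ge 1$ for $s\in S_+$ requires knowing $\pi(s)\neq 0$ for non-units $s$; this follows because pointedness forces any $s\in\NN\calA$ with $\pi(s)=0$ to be a sum of torsion generators, hence torsion, hence invertible.
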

\begin{proof}
Let $U$ be the units in $S$; then one may form the factor semigroup $\bar S:=S/U$ which acts on $T$ as well. The identification space $\bar T$ obtained from $T$ by identifying for all $t\in T$ the elements $\{u\bullet t\mid u\in U\}$ is an $\bar S$-module and $\bar s\bullet \bar t=\bar t$ if and only if $\bar s $ is the coset of the identity of $S$. Then $S_+/U=(\bar S)_+$ and cosets of primitive elements in $T$ consist exclusively of primitive elements of $T$, forming primitive elements of $\bar T$. Finiteness of $U$ now implies that for the purpose of the proof we may assume that $U$ is trivial, $S=\bar S$ and $T=\bar T$. Note that this reduction eliminates all torsion from $S$.

In general, let $t\in \Tprim$; according to the setup it is in $\bigcup_1^k S\bullet t_i$ and so there is $i\in\{1,\ldots,k\}$ and $s_i\in S$ with $t=s_i\bullet t_i$. But for a primitive element this can only happen if $s_i$ is a unit (in the reduced setting, the identity,) of $S$.  Since $T$ is assumed to be finitely generated over $S$, we have shown that $|\Tprim|$ is at most $k$ times the number of units of $S$.

We introduce a descending \emph{level} filtration on $T$ by $\level(t)\geq h$ if $t\in \underbrace{(S_++S_++\ldots+S_+)}_{h\text{ copies}}\bullet T$. We say that $t$ is of \emph{height} $h$ if  $\level(t)\geq h$ but $\level(t)\not\geq h+1$. It is convenient to say that every element of $T$ is in level zero, and thus we attach to $t\in T\minus S_+\bullet T$ the height zero.

Let now $g_1,\ldots,g_q$ be nonzero semigroup generators for $S$ and let
$t_1,\ldots,t_k$ be generators for $T$ over $S$. The reduction $S=\bar S$ forces $g_i\in S_+$, and every element of $S_+$ can be written as a sum of the generators (perhaps in many ways).

We claim that every $t\in T$  has well-defined (and, in particular, finite) height. Indeed, suppose $t$ is in every level. So there are equations $t=(\sum_{i=1}^{h'} g_{i,h})\bullet t_{i_h}$ for each $h\in\NN$, with $h'\geq h$ and each $g_{i,h}$ one of our chosen generators in $S_+$.

Since there are only finitely many generators $t_{i_h}$ for $T$, but infinitely many such expressions for $t$, we can choose a subsequence for which always the same $t_{i_h}$ is used, discard the other expressions for $t$, and relabel.

Let $\mu_{h,j}$ be the number of times that the generator $g_j$ appears in the chosen $h$-th sum $t=(\sum_{i=1}^{h'} g_{i,h})\bullet t_{i_h}$.
As there are only finitely many $S$-generators, the set of all $\mu_{h,j}$ cannot be bounded since there are arbitrarily long sum expressions for $t$. Hence, there is at least one generator $g_1$ of $S$ such that the set of natural numbers $\{\mu_{h,1}\}$ is unbounded. Then the  unbounded sequence $(\mu_{h,1})_{h\in\NN}$ contains a strictly increasing subsequence.  Choose such a subsequence, keep the corresponding sum expressions for $t$, discard all others, and relabel.

Now it can happen that for all other generators $g_i\in S_+$ of $S$ the sets $\{\mu_{i,h}\}$ are finite. If so, move to the next paragraph. Otherwise, take a generator $g_2$ that appears with unbounded multiplicity. Repeating the argument in the previous paragraph,  choose a subsequence of expressions for $t$ in which now both $\mu_{h,1}$ and $\mu_{h,2}$ are strictly increasing. Iterate this process, if necessary.

In this manner, we find an infinite sequence of expressions for $t$ in which the multiplicities for $g_1,\ldots,g_r\in S_+$ are strictly increasing, and the multiplicities for $g_{r+1},\ldots,g_q$ are uniformly bounded. The pigeon hole principle now dictates that there are two expressions for $t$ in which for all $g_i$ the multiplicity of $g_i$ in the second expression strictly dominates the multiplicity in the first. So we have
\[
(a_1g_1+\ldots+a_qg_q)\bullet t_i=t=(b_1g_1+\ldots b_qg_q)\bullet t_i,
\]
where $a_j< b_j$ for $j=1,\ldots,q$, and so $\sum_{j=1}^q (b_j-a_j)g_j\in S_+$ acts on $t_i$ as identity. By hypothesis, this implies that this sum is a unit (that is, the identity in the reduced setting).
By contradiction, $t$ must have a finite level, as claimed.

We are now ready to finish the proof of the lemma.
By definition, $\Tprim$ are the elements of height zero. Any generator $t_i$ of $T$ that is not height zero is of the form $s\bullet t$ for some $s\in S_+$ and some $t\in T$. By definition of level and height, at least one such rewriting $t_i=s\bullet t$ exists where the height of $t$ is smaller than the height of $t_i$. Then $\{t_1,\ldots,t_k,t\}\minus\{t_i\}$ is a generating set for $T$ over $S$. By iteration, we can replace any element of positive height among $\{t_1,\ldots,t_k\}$ by one of height zero, while preserving the fact that they are a generating set. It follows that $\Tprim$ generates $T$ over $S$.
\end{proof}

\begin{ntn}
On the semigroups $\NN \calA$ and $\NN A$ there is a tautological degree function, $\deg_A(-)$ with values in $\ZZ^d$ that sends an element to its natural image in $\NN A$.

Suppose $T$ is an $S$-module with $S=\NN A$ or $\NN\calA$. We say $T$ \emph{has an $A$-grading} if there is a map $(-)_A\colon T\to \ZZ A$ such that for all $s\in S,t\in T$ we have $(s\bullet t)_A=\deg_A(s)+(t)_A$. All $S$-modules inside the Grothendieck group $\ZZ S$ have a natural $A$-grading.
\end{ntn}

Define for an $\NN\calA$-module $T$ the $S_\calA$-module
\[
M_T:=\bigoplus_{t\in T}\KK\cdot t
\]
via the rule $(c_s\cdot s)\bullet(c_t\cdot t):=(c_sc_t)\cdot(s\bullet t)$, for $c_s,c_t\in\KK, s\in \NN\calA,t\in T$. If $T$ has an $A$-grading $(-)_A$, then $M_T$ becomes an $A$-graded $R_\calA$-module via $\deg_A(c_t\cdot t):=(t)_A$. If $T$ is finitely generated over $\NN\calA$ then $M_T$ is a finitely generated $A$-graded $S_\calA=\KK[\NN\calA]$-module, and hence $\calA$-toric.

\begin{thm}\label{thm-holonomic}
For every finitely generated $A$-graded $\NN\calA$-module $T$ and all $\beta\in N_\CC$, the $D_A$-module $H_{A,0}(\iota_\pi(M_T);\beta)$ is holonomic. In fact, $H_{A,0}(\iota_\pi(M_T);\beta)$ is the terminal homology group of an Euler--Koszul complex in which every homology group is a holonomic $D_\calA$-module. This Euler--Koszul complex is exact if and only if $\beta$ is not in the Zariski closure of $\deg_A(T)$.
\end{thm}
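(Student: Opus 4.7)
The plan is to derive the theorem directly from Lemma~\ref{lem-ttoric} and Theorem~\ref{thm-ttoric-EK}, taking the Euler--Koszul complex in the statement to be $K_{A,\bullet}(\iota_\pi(M_T);\beta)$. Since the Weyl algebras $D_\calA$ and $D_A$ coincide as ungraded rings (and $\iota_\pi$ is the identity on underlying modules), ``holonomic $D_\calA$-module'' and ``holonomic $D_A$-module'' are the same condition, so the three assertions of the theorem translate into a single statement about this Euler--Koszul complex.

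First, I would recall from the paragraph preceding the theorem that, because $T$ is finitely generated over $\NN\calA$, the module $M_T=\bigoplus_{t\in T}\CC\cdot t$ is a finitely generated $A$-graded $S_\calA$-module, hence $\calA$-toric. Lemma~\ref{lem-ttoric} then produces that $\iota_\pi(M_T)$ is $A$-ttoric, equipped with a finite filtration whose composition factors are (shifts of) quotients $R_A/I^\tau_{A,\rho}$.

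Second, I would invoke Theorem~\ref{thm-ttoric-EK} for the $A$-ttoric module $\iota_\pi(M_T)$ and the parameter $\beta$ (understood in $\bar N_\CC$ via the projection $N_\CC\twoheadrightarrow\bar N_\CC$ induced by $\pi$). This immediately yields that every homology group of $K_{A,\bullet}(\iota_\pi(M_T);\beta)$ is a holonomic left $D_A$-module and that the complex is exact if and only if $\beta\notin\qdeg_A(\iota_\pi(M_T))$. The terminal homology group is, by construction, $H_{A,0}(\iota_\pi(M_T);\beta)$.

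Finally, I would identify this locus of quasi-degrees with the Zariski closure of $\deg_A(T)$. By the defining rule $\deg_A(c_t\cdot t)=(t)_A$ for the $A$-grading on $M_T$, the true $A$-degrees of $M_T$ are exactly $\{(t)_A:t\in T\}=\deg_A(T)$; since $\iota_\pi$ preserves the $\bar N$-grading on the underlying module, the same holds for $\iota_\pi(M_T)$, so $\qdeg_A(\iota_\pi(M_T))$ is the Zariski closure of $\deg_A(T)$ in $\bar N_\CC$. I do not expect a real obstacle: the substantive work is carried entirely by Lemma~\ref{lem-ttoric} (which provides the composition factors $R_A/I^\tau_{A,\rho}$ after applying $\iota_\pi$) and Theorem~\ref{thm-ttoric-EK} (which extends the holonomicity and exactness results of \cite{MMW05} to the ttoric setting). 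The only bookkeeping to watch is the identification $D_\calA=D_A$ and the agreement of $\deg_A(T)$ with the true $A$-degrees of $\iota_\pi(M_T)$.
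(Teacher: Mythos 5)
Your proposal is correct and follows exactly the paper's own argument: the paper's proof is the two-line observation that $M_T$ is $\calA$-toric (hence, by Lemma~\ref{lem-ttoric}, $\iota_\pi(M_T)$ is $A$-ttoric) and Theorem~\ref{thm-ttoric-EK} then gives all three assertions. The extra bookkeeping you supply --- the identification $D_\calA=D_A$ as ungraded rings and the equality $\qdeg_A(\iota_\pi(M_T))=\overline{\deg_A(T)}$ --- is left implicit in the paper, but is exactly the right thing to check.
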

\begin{proof}
In view of Theorem \ref{thm-ttoric-EK}, all we need to show is that $\iota_\pi(M_T)$ is $A$-ttoric. But that follows from Lemma \ref{lem-ttoric}.
\end{proof}

\begin{cor}\label{cor-regular}
If there is a linear functional $h\colon N\to \RR$ such that $\calA\subseteq h^{-1}(1)$ then for any finitely generated $\NN\calA$-module $T$ and for all $\beta\in\Hom(N,\CC)$, the Euler--Koszul complex on $M_T$ has regular holonomic homology.
\end{cor}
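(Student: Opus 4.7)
The plan is to reduce the regularity claim to the classical theorem (due to Hotta, re-derived in \cite{MMW05}) that homogeneous classical GKZ-systems are regular holonomic, by running the same filtration-based strategy used in the proof of Theorem~\ref{thm-ttoric-EK}.

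First I would use Theorem~\ref{thm-holonomic} to note that $\iota_\pi(M_T)$ is $A$-ttoric, so by Lemma~\ref{lem-ttoric} it admits a finite filtration whose successive quotients are shifted copies of $R_A/I^\tau_{A,\rho}$ for various faces $\tau$ of $A$ and characters $\rho$. Since regular holonomic $D_A$-modules form a Serre subcategory of holonomic ones, the long exact sequences of Euler--Koszul homology propagate regularity along such filtrations, reducing the claim to showing regularity of each $H_{A,i}(R_A/I^\tau_{A,\rho};\beta)$. Arguing as in the second half of the proof of Theorem~\ref{thm-ttoric-EK}, the higher Euler--Koszul homologies of $R_A/I^\tau_{A,\rho}$ are subquotients of finite direct sums of terminal homologies of the same type, so one is reduced to proving regularity of $H_{A,0}(R_A/I^\tau_{A,\rho};\beta)$.

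Second, I would invoke the explicit description established in the proof of Theorem~\ref{thm-ttoric-EK}. Writing $B$ for the submatrix of $A$ cut out by the columns in $\tau$, $\rho^B$ for the restriction of $\rho$ to $\ker(B)$, and $\beta^\tau$ for the restriction of $\beta$ to the column span of $B$, one has
\[
H_{A,0}(R_A/I^\tau_{A,\rho};\beta)\cong\bigl(D_B/D_B\cdot(I_{B,\rho^B},\{E^\tau_i-\beta^\tau_i\})\bigr)\otimes_\CC\CC[\{x_j\mid\bolda_j\notin\tau\}].
\]
The monomial automorphism of $D_B$ induced by Remark~\ref{rmk-extend-rho} identifies the first tensor factor with a classical GKZ-system $H_{B,0}(R_B/I_B;\widetilde\beta^\tau)$ for a suitable translate $\widetilde\beta^\tau$ of $\beta^\tau$. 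Automorphisms of the Weyl algebra preserve regular holonomicity, and the outer tensor factor realizes the whole module as the $D$-module inverse image along the smooth projection that forgets the variables $x_j$ with $\bolda_j\notin\tau$; smooth pullback also preserves regularity. Hence it suffices to establish regular holonomicity of the classical $H_{B,0}(R_B/I_B;\widetilde\beta^\tau)$.

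Third, I would transfer the homogeneity hypothesis. Since $\RR$ is torsion-free, any linear functional $h\colon N\to\RR$ must vanish on the torsion part $F$ and therefore factors through $\pi$ as $h=\bar h\circ\pi$ for some $\bar h\colon\bar N\to\RR$. The assumption $\calA\subseteq h^{-1}(1)$ then yields $A\subseteq\bar h^{-1}(1)$, and restricting to the columns of any face $\tau$ preserves this; so each $B$ arising above is homogeneous in the classical sense. The Hotta--Schulze--Walther theorem (cf.\ \cite{MMW05}) then gives regular holonomicity of $H_{B,0}(R_B/I_B;\widetilde\beta^\tau)$, and the whole argument closes up.

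The main obstacle is the bookkeeping in the second step: one must confirm that the monomial twist of Remark~\ref{rmk-extend-rho} lifts to a Weyl-algebra automorphism of $D_B$ that actually intertwines the $\rho$-twisted system with the untwisted GKZ-system, and that the free-variable tensor factor is genuinely the $D$-module inverse image along a smooth projection (rather than only a coincidence of underlying $\CC$-vector spaces). Once these identifications are carefully pinned down, the rest is a routine assembly of the standard stability properties of regular holonomicity (under extensions, smooth pullback, and Weyl-algebra isomorphisms) applied to the composition factors supplied by Lemma~\ref{lem-ttoric}.
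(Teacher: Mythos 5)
Your proposal is correct and follows the same route as the paper's (much terser) proof, which simply observes that by the proof of Theorem~\ref{thm-ttoric-EK} every Euler--Koszul homology group of $M_T$ is a quotient of a finite direct sum of modules isomorphic to $A$-hypergeometric systems, and then invokes regular holonomicity of homogeneous GKZ-systems (for which the paper cites \cite{SchulzeWalther-duke} rather than \cite{MMW05}; the needed regularity statement is not actually in the latter). One small detail worth tightening in your third step: the matrix $B$ is obtained from $A\cap\tau$ by additionally selecting a subset of $\dim(\tau)$ rows, but homogeneity still transfers because the condition $\calA\subseteq h^{-1}(1)$ amounts to $(1,\dots,1)$ lying in the row span of the relevant matrix, and this is invariant under replacing the row set by any subset spanning the same row space.
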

\begin{proof}
In the proof of Theorem \ref{thm-ttoric-EK} it is shown that every homology group of the Euler--Koszul complex on $M_T$ is a quotient of a finite sum of modules that are isomorphic to  $A$-hypergeometric systems in the sense of \cite{MMW05}. But, if a functional $h$ as in the corollary exists, then
every GKZ-system $H_A(\beta)$ is regular holonomic  by \cite{SchulzeWalther-duke}.
 The corollary follows from standard properties of regular holonomicity.
\end{proof}

\subsection{Rank and Duality}

The transposition
$\tau(x^\bolda\del^\boldb)=(-\del)^\boldb x^\bolda$ on $D_\calA$ provides an equivalence of the categories of left and right $D_A$-modules, that corresponds on the level of sheaves to the tensor product with the canonical sheaf $\omega_{\CC^n}$ in one direction, and $\sHom_{\calO_{\CC^n}}(\omega_{\CC^n},-)$   in the other.

The automorphism $x\mapsto -x$ on $\CC^n$ induces, via $\del\mapsto -\del$, an auto-equivalence $(-) \mapsto (-)^-$ on the categories of $\ZZ A$-graded $R_A$-modules and $D_A$-modules (but not on the category of
$\ZZ A$--graded $S_A$-modules, since $I_A$ is in general not  preserved under $(- )^-$). The formation of Euler--Koszul complexes is equivariant under this sign change since $E_i-\beta_i= (E_i-\beta_i)^-$. Moreover, for $\ZZ A$-graded $R_A$-modules $N$ we have $D_A \otimes_{R_A} N\simeq\tau (N \otimes_{R_A} D_A)^-$ as left $D_A$-modules, where the tensor products exploit the two different
$R_A$-structures on $D_A$.

Let $M$ be a  finitely generated $\ZZ A$-graded $R_A$-module and choose a minimal free $\ZZ A$-graded resolution $F_\bullet$. Denoting
\begin{equation}\label{eqn-koszul-free}
\varepsilon_A:=\sum_{\bolda\in \calA}\pi(\bolda)
\end{equation}
the sum of all elements of the multi-set $A$,
it is explained in \cite[Section 6]{MMW05}, that there is a natural identification of the complexes
\[
\tau \Hom_{D_A}(K_{A,\bullet}(F_\bullet;E-\beta),D_A)^- \simeq K_{,\bullet}(\Hom_{D_A}(F_\bullet,D_A);-E-\beta-\varepsilon_A).
\]
The same construction works if $F_\bullet$ is  any
free finite $\ZZ A$-graded complex, and in this setting $F_\bullet$ having toric homology assures the resulting homology groups of the Euler--Koszul complex to be holonomic. In consequence, for a finite $\ZZ A$-graded complex $F_\bullet$ with toric homology, the holonomic dual of the higher Euler--Koszul homology $H_{A,i}(F_\bullet;\beta)$ is, up to a degree shift by $\varepsilon_A$, the Euler--Koszul homology $H_{A,n-i}(\Hom_{D_A}(F_\bullet,D_A);-\beta-\varepsilon_A)^-$. In particular, if $F_\bullet$ has only one homology module $M$, say in cohomological position zero, then $\DD H_{A,i}(M,\beta)=H_{A,n-i}(\Hom_{D_A}(F_\bullet,D_A);-\beta-\varepsilon_A)^-$, and if $M$ is a Cohen--Macaulay $R_A$-module of dimension $d$ then (there is no higher Euler--Koszul homology of $M$, and)
\begin{equation}\label{eq:Duality}
\DD H_{A,0}(M,\beta)=H_{A,0}(\Ext^{n-d}_{R_A}(M,R_A);-\beta-\varepsilon_A)^-.
\end{equation}

We want this isomorphism for finite free $\ZZ A$-graded $D_A$-complexes with $A$-ttoric homology. Inspecting the proof of Thorem 6.3 in \cite{MMW05} reveals that the only property of "toric $F$" that is used is that $A$-toric modules are $A$-graded and produce holonomic Euler--Koszul homology modules. That, however is also the case for $A$-ttoric modules.

\begin{cor}
The duality statement \eqref{eq:Duality} applies in particular to $\calA$-toric modules $M$.\qed
\end{cor}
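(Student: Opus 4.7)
The plan is to reduce the statement to the preceding paragraph, which extends the duality of \cite[Theorem~6.3]{MMW05} from $A$-toric to $A$-ttoric modules, using the bridge $\iota_\pi$ provided by Lemma \ref{lem-ttoric}.

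Concretely, given an $\calA$-toric module $M$, I would first form $\iota_\pi(M)$, which is an $A$-ttoric $R_A$-module by Lemma \ref{lem-ttoric}. Because the Euler operators $E_i \in D_A$ and the $D_A$-action are defined in terms of $A$ and the underlying $\CC$-vector space (rather than in terms of the $\calA$-refinement of the grading), the Euler--Koszul complexes $K_{A,\bullet}(M;\beta)$ and $K_{A,\bullet}(\iota_\pi(M);\bar\beta)$ agree, where $\bar\beta \in \bar N_\CC$ is the image of $\beta \in N_\CC$ under $\pi$. Having reduced to the $A$-ttoric setting, I then apply the displayed duality \eqref{eq:Duality}: under the Cohen--Macaulay hypothesis on $\iota_\pi(M)$ as an $R_A$-module of dimension $d$, this yields
\[
\DD H_{A,0}(M,\beta) = H_{A,0}\bigl(\Ext^{n-d}_{R_A}(\iota_\pi(M), R_A);\, -\bar\beta - \varepsilon_A\bigr)^-.
\]

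The one place where I expect a genuine verification to be needed is ensuring that the Ext-module appearing on the right-hand side lies in a class (namely $A$-ttoric) for which its Euler--Koszul homology is holonomic and the formula makes sense. I would handle this by choosing a minimal $\bar N$-graded free $R_A$-resolution $F_\bullet$ of $\iota_\pi(M)$, noting that $\Hom_{R_A}(F_\bullet, R_A)$ is a finite free $\bar N$-graded complex concentrated (under the Cohen--Macaulay assumption) in cohomological degree $n-d$, and invoking the associated-primes analysis used in the proof of Lemma \ref{lem-ttoric} to conclude that $\Ext^{n-d}_{R_A}(\iota_\pi(M),R_A)$ is again $A$-ttoric. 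Then Theorem \ref{thm-ttoric-EK} ensures holonomicity of both sides, the natural identification $\tau \Hom_{D_A}(K_{A,\bullet}(F_\bullet;E-\beta),D_A)^- \simeq K_{A,\bullet}(\Hom_{D_A}(F_\bullet,D_A);-E-\beta-\varepsilon_A)$ recalled in the paragraph preceding the corollary carries over verbatim to the ttoric setting, and the corollary follows.
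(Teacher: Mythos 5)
Your proposal is correct and follows essentially the same route as the paper's (implicit) proof, namely: pass from the $\calA$-toric module $M$ to the $A$-ttoric module $\iota_\pi(M)$ via Lemma \ref{lem-ttoric}, and then invoke the extension of \cite[Theorem~6.3]{MMW05} to finite free $\ZZ A$-graded complexes with $A$-ttoric homology stated in the paragraph preceding the corollary. The paper treats this as immediate and supplies no explicit proof; you spell out the reduction more fully and, in addition, flag the one place where one might worry (that $\Ext^{n-d}_{R_A}(\iota_\pi(M),R_A)$ should again lie in a class producing holonomic Euler--Koszul homology). This extra verification is reasonable diligence, though strictly speaking it is not required to state the corollary: once the formal identification $\tau \Hom_{D_A}(K_{A,\bullet}(F_\bullet;E-\beta),D_A)^- \simeq K_{A,\bullet}(\Hom_{D_A}(F_\bullet,D_A);-E-\beta-\varepsilon_A)$ is in place and the left side is known to compute the holonomic dual (because the homology of $K_{A,\bullet}(F_\bullet;E-\beta)$ is holonomic by Theorem \ref{thm-ttoric-EK}), the holonomicity of the right-hand side comes out automatically. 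Two small remarks: since $N_\CC = N\otimes_\ZZ\CC = \bar N_\CC$ kills torsion, the distinction you draw between $\beta$ and $\bar\beta$ is vacuous; and, as in the paper, the formula \eqref{eq:Duality} tacitly carries the Cohen--Macaulay hypothesis on $\iota_\pi(M)$, which you correctly preserve.
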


\section{Applications}

\begin{dfn}[Differential systems on $\calA$-toric modules]
Choose $r\in\NN$, $\beta\in N\otimes_\ZZ\CC$, and let $T\subseteq N^r$ be a module over the semigroup $\NN \calA\subseteq N$. Let
\[
\Phi_T = (\phi_\boldu)_{\boldu \in T}
\]
be an assignment of a (sufficiently differentiable) function in $x_A$ to each element of $T$.
The natural morphism
$N^\vee=\Hom_\ZZ(N,\ZZ)\to \Hom_\CC(N\otimes \CC,\CC)$
allows to define $\mu(\beta)$ for any $\mu\in N^\vee$.

Consider the following system of partial differential equations:
\[
\{\del_{j} \phi_\boldu = \phi_{\boldu +\bolda_j}\}_{\boldu\in T,
\bolda_j\in\calA}\qquad
\cup\qquad
\{\sum_{\bolda_j\in\calA} \mu(\bolda_j) x_j \del_j \phi_\boldu =
\mu(\beta-\boldu) \phi_\boldu\}_{\boldu\in T, \mu\in N^\vee}.
\]
Let us write
\[
E_\mu:=\sum_{\bolda_j\in\calA} \mu(\bolda_j) x_j\del_{j}\quad\in D_\calA.
\]
\end{dfn}
A system of type $\Phi_T$ induces a $D_\calA$-module
\begin{align*}
\Phi_{\mca,0}(T;\beta) :=  D_\calA^T /H_\calA(T;\beta),
\end{align*}
where $H_\calA(T;\beta)\subseteq D^T_\calA$ is the left $D_\calA$-module
\begin{align}\label{eqn-bbgkz-rels}
H_\calA(T;\beta):=D_\calA\cdot\left(\{ \del_{j}1_\boldu -1_{\boldu+\bolda_j}\}_{\boldu\in T,\bolda_j\in\calA} \quad\cup\quad
\{(E_\mu - \mu(\beta-\boldu))
\cdot 1_{\boldu}\}_{\boldu\in T,\mu\in N^\vee}\right)
\end{align}
and $1_\boldu$ is the element of $\mcd^T$  that is  1 in coordinate
$\boldu$ and zero everywhere else.

The system $\Phi_T$ is the space of classical solutions to $\Phi_{\calA}(T;\beta)$. In particular, it can be identified with the localization of $\Phi_{\calA}(T;\beta)$ to a generic point.

Since $\Tprim$ generates $T$
as $\NN \mca$-module by Lemma \ref{lem:PrimFinitGen},  the defining
relations spelled out in \eqref{eqn-bbgkz-rels} imply that
$\Phi_{\calA}(T;\beta)$ is as $D_\mca$-module generated by the
cosets of all $1_\boldu$ with $\boldu\in\Tprim$; it is hence  a finitely
generated $D_\mca$-module and we can replace in the definition of
$\Phi_{\calA,0}(T;\beta)$ the set $T$  by $\Tprim$:
\[
\Phi_{\calA,0}(T;\beta)\,\simeq_{D_\mca}\,
D_\calA^\Tprim / \Hprim(T;\beta)
\]
where
\[
\Hprim(T;\beta):=H_{\calA}(T;\beta)\cap D_\calA^\Tprim.
\]

Note that if we set
\[
\Iprim(T):=D_\calA\cdot\{\del_\calA^\boldu\cdot 1_{\boldu'}-\del_\calA^\boldv\cdot 1_{\boldv'}\mid \boldu+\boldu'=\boldv+\boldv'\}_{\boldu',\boldv'\in\Tprim}
\]
then
\begin{equation}\label{eqn-HAprim}
\Hprim(T;\beta)=D_\calA\cdot(\Iprim(T),\{(E -\mu(\beta-\boldu))\cdot 1_\boldu\}_{\mu\in N^\vee,\,\boldu\in\Tprim}).
\end{equation}

\begin{exa}
One example of a finitely generated $\NN\calA$-module  is given by $K$, as defined in Formula
\eqref{def:SemiGroupK}. Indeed, the image $\pi(K)$ is the saturation semigroup of $\NN A$, the set of all lattice points $\boldu\in\ZZ^d$ such that $\NN \boldu$ meets $\NN A$. That $\pi(K)$ is a finitely generated module over $\NN A$ is Gordan's Lemma (see, \emph{e.g.}, \cite[Proposition 1.2.17]{CoxBook}). That $K$ is finite over $\NN \calA$ then follows from the fact that the fibers of $\pi$ are finite.

A second important finitely generated $\NN\calA$-module is the set $K^\circ$, sitting over the interior points of
$K$,
\begin{equation}\label{eqn-Kpos}
K^\circ=\{\boldu\in K\mid \pi(\boldu)\notin \tau\,\,\forall \tau\in\calF_K\}.
\end{equation}
The shortest argument that $K^\circ$  is finitely generated is slightly
roundabout. Let $\KK$ be any field, $S\subseteq \ZZ^d$ a semigroup without non-trivial units, $T\subseteq \ZZ^d$ an
$S$-module, and consider the semigroup ring $\KK[S]=\bigoplus_S
\KK\cdot s$ and the vector space $\bigoplus_T\KK\cdot t$, which in
natural fashion has a $\KK[S]$-module structure.  If $S$ is finitely
generated by $\tau$ elements, then $S$ is a quotient of $\NN^\tau$ and
so $\KK[S]$ is the quotient of a polynomial ring in $\tau$ variables,
and in particular Noetherian. If then $T$ were an infinitely generated
module over $S$, one would have an infinite ascending chain of
$\KK[S]$-submodules of $\bigoplus_T\KK\cdot t$, which could then not
be Noetherian. Now consider the $\NN A$-module $K^\circ$ and the
corresponding $\KK[\NN A]$-module $\KK[\pi(K^\circ)]$. Since $\pi(K)$ is
a finite $\NN A$-module, $\KK[\pi(K)]$ is module-finite over
$\KK[\NN A]$. But $\KK[\pi(K^\circ)]$ is the canonical module of
the normal Cohen--Macaulay ring $\KK[\pi(K)]$ (by work of Hochster, Danilov and Stanley), and thus certainly module-finite over $\KK[\pi(K)]$. Finiteness of fibers of $\pi$ then dictates that $K^\circ$ is finitely generated over $\NN \calA$.
\end{exa}

\begin{rmk}
It follows from the description \eqref{eqn-HAprim} that for every $\calA$-toric module $T$,
\[
\iota_{\pi}(\Phi_{\calA,0}(T;\beta))=H_{A,0}(M_T;\beta)
\]
is the distinguished (terminal) homology group of the Euler--Koszul complex to $\beta$ on $M_T$.

The case $T=K$ inspired Borisov and Horja to write down the differential system $\Phi_K$ and the corresponding $D_A$-module $\Phi_{A}(K;\beta)$ which they termed the \emph{better behaved GKZ-system}.

In particular, the better behaved GKZ-system to $\beta$ in the torsion-free case $F=0$ is precisely the $A$-hypergeometric module that corresponds to the toric $S_A$-module $M_K=\tilde S_A$, the normalization of $S_A$.
\end{rmk}

\begin{exa}\label{ex:ProdStruct}
Let $N=(\ZZ/2\ZZ)\oplus \ZZ$, and assume $\calA$ is the singleton $\bolda_1=(\bar 1,1)$. Then $\NN\calA=\NN\cdot (\bar 1,1)\subseteq N$ is isomorphic to $\NN=\pi(\NN\calA)$ and so $R_\calA$ and $R_A$ are isomorphic to the polynomial ring $\KK[\del_1]$, with corresponding $\calA$- or $A$-grading. The semigroup $K$ is $(\ZZ/2\ZZ)\oplus \NN$ and its semigroup ring is isomorphic to $(\KK[y_0]/(y_0^2-1))\otimes (\KK[y_1])$.

However, under this isomorphism, $\del_1$ acts as the element $y_0y_1$, and in particular does not act trivially on the $y_0$-factor. Thus, (here and in general) there is an obvious abstract vector space isomorphism between $\KK[K]$ and a degree-$|F|$ ring extension of $\tilde S_A$, but it is less clear (in general) that there is such an isomorphism that is an isomorphism of $R_A$-algebras.
\end{exa}

\begin{cor}\label{cor:Rank}
We consider the $D_A$-modules $H_{A,0}(M_K;\beta)$ and $H_{A,0}(M_{K^\circ};\beta)$. Both have rank equal to
$|F|\cdot\vol(A\cup\{0\})$.
\end{cor}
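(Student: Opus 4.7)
The strategy is to combine rank additivity on an $A$-ttoric filtration (Lemma \ref{lem-ttoric}) with a length computation at the torus localization.

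By the preceding corollary, the rank of $H_{A,0}(\iota_\pi(M);\beta)$ is upper-semicontinuous in $\beta$ and constant on the complement of a subspace arrangement in $\CC^d$, so I may fix $\beta \in N_\CC$ avoiding every $\CC(A\cap\tau)$ with $\tau$ a proper face of $K_\RR$. For such $\beta$, a composition factor $R_A/I^\tau_{A,\rho}$ with $\tau$ proper satisfies $\beta \notin \qdeg_A(R_A/I^\tau_{A,\rho}) = \CC(A\cap\tau)$, whence $H_{A,0}(R_A/I^\tau_{A,\rho};\beta) = 0$ by Theorem \ref{thm-ttoric-EK}; for $\tau = K_\RR$, the monomial automorphism of Remark \ref{rmk-extend-rho} carries $H_{A,0}(R_A/I_{A,\rho};\beta)$ to a classical GKZ system whose generic rank is $\vol(A\cup\{0\})$. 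Using rank additivity as in the proof of Lemma \ref{lem-rank-0}, the total rank equals $\vol(A\cup\{0\}) \cdot N_{\mathrm{top}}(M)$, where $N_{\mathrm{top}}(M)$ counts (with multiplicity) the top-cone composition factors $R_A/I_{A,\rho}$ of $\iota_\pi(M)$.

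To compute $N_{\mathrm{top}}(M_K)$ I localize at the multiplicative set $\{\del^\boldu : \boldu\in\NN^n\}$. The $R_A$-action on $M_K$ via $\iota_\pi$ factors through the surjection $R_A \twoheadrightarrow R_\calA/I_\calA \cong \CC[\NN\calA]$, so the localization is a $\CC[\ZZ\calA]$-module. The central claim is that this localization equals $\CC[N]$ under the natural action induced by the inclusion $\ZZ\calA \subseteq N$. This reduces to the set-theoretic identity $K - \NN\calA = N$: given $(f,v) \in F\oplus\bar N = N$, pick $\boldu\in\NN^n$ with $v+A\boldu \in K_\RR\cap\bar N$, so that $(f,v)+\calA\boldu\in K$. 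Setting $F' := F\cap\ZZ\calA$ and splitting the sequence $0 \to F' \to \ZZ\calA \to \bar N \to 0$ (possible because $\bar N$ is torsion-free) yields $\CC[\ZZ\calA]\cong\CC[F']\otimes_\CC\CC[\bar N]\cong\bigoplus_{\chi\in\widehat{F'}}\CC[\bar N]$, a semiprime ring with $|F'|$ minimal primes each having residue field $\CC(\bar N)$. Since $\CC[N]$ is a free $\CC[\ZZ\calA]$-module of rank $\delta=[N:\ZZ\calA]=|F/F'|$, its total length at the $|F'|$ generic points is $\delta \cdot |F'| = |F|$, so $N_{\mathrm{top}}(M_K)=|F|$ as required.

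The case $M_{K^\circ}$ is handled identically once $\iota_\pi(M_{K^\circ})[\del^{-1}] \cong \CC[N]$, which in turn reduces to $K^\circ - \NN\calA = N$: for $(f,v)\in N$, choose $\boldu\in\NN^n$ with $v+A\boldu$ in the relative \emph{interior} of $K_\RR\cap\bar N$, so that $(f,v)+\calA\boldu\in K^\circ$. The main technical point is the rank-additivity step at generic $\beta$: all higher Euler--Koszul homologies of proper-face factors vanish by Theorem \ref{thm-ttoric-EK}, so the long exact sequence of $H_{A,\bullet}$ collapses sufficiently to yield short exact sequences on $H_{A,0}$, and rank additivity proceeds as in the argument of Lemma \ref{lem-rank-0}.
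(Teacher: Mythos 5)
Your argument is a genuinely different route from the paper's, and the part of it that you carry out is essentially correct. The paper constructs an explicit $R_\calA$-linear isomorphism $\KK[K]\simeq\bigoplus_{|F|}\tilde S_A$ by splitting $\KK[K]\simeq \tilde S_A\otimes_\KK\KK[y_\bullet]/(y_i^{\ell_i}-1)$ with a Vandermonde matrix, and similarly for $K^\circ$; your approach instead counts top-dimensional composition factors by localizing at the torus. Your length computation is sound: $\CC[\ZZ\calA]\cong\CC[F']\otimes_\CC\CC[\bar N]$ has $|F'|$ minimal primes, $\CC[N]$ is $\CC[\ZZ\calA]$-free of rank $\delta=[N:\ZZ\calA]=|F/F'|$, and $\delta|F'|=|F|$, while the reductions $K-\NN\calA=N$ and $K^\circ-\NN\calA=N$ are correct. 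Combined with vanishing of all Euler--Koszul homology of proper-face factors at generic $\beta$ (Theorem \ref{thm-ttoric-EK}) and the fact that the top-cone factors are, after the monomial twist of Remark \ref{rmk-extend-rho}, classical GKZ-modules of generic rank $\vol(A\cup\{0\})$, this gives the generic rank $|F|\cdot\vol(A\cup\{0\})$.

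However, there is a genuine gap: the corollary asserts the rank for \emph{every} $\beta$, and your argument only yields it for $\beta$ outside the arrangement $\calE_M$. Upper semicontinuity of rank (Lemma \ref{lem-rank-0} and its corollary) gives an \emph{inequality} $\rank H_{A,0}(M;\beta)\ge$ (generic rank) on $\calE_M$; it does not rule out rank jumps. To close this you need to know, as the paper shows, that $\iota_\pi(M_K)$ and $\iota_\pi(M_{K^\circ})$ are maximal Cohen--Macaulay $R_A$-modules, which is exactly what the explicit direct sum decomposition buys: a direct sum of copies of $\tilde S_A$ (resp.\ of $\KK[\pi(K^\circ)]$) is Cohen--Macaulay, hence has no higher Euler--Koszul homology and no rank jumps. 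Your torus localization only controls the generic fiber (rank of $M[\del^{-1}]$ over $\CC[\ZZ\calA]$) and says nothing about depth, so it cannot by itself certify the Cohen--Macaulay property that is needed for the parameter-independence of the rank.
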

\begin{proof}
By definition, $K$ is the preimage under $\pi$ of the lattice points in the cone spanned by $A=\pi(\calA)$. In particular, as a semigroup, $K=F\oplus \pi(K)$. As a semigroup ring, $\KK[\pi(\calA)]$ is the saturated semigroup ring to $\KK[\NN A]$, and
\[
\KK[K]\simeq \tilde S_A\otimes_\KK \KK[y_1,\ldots,y_k]/(\{y_i^{\ell_i}-1\}_1^k),
\]
where $F=\bigoplus_1^k (\ZZ/\ell_i\ZZ)$ and $\ell=\prod \ell_i$. While the $R_A$-structure on $\KK[K]$ in the above description is not compatible with the "natural" subring $\tilde S_A=\tilde S_A\otimes 1\into \KK[K]$ (as seen in the above Example \ref{ex:ProdStruct}), we show below that there  \emph{is} such a direct sum decomposition that is compatible with this natural structure; we are interested in it because it certifies $\KK[K]$ as a Cohen--Macaulay $R_A$-module.

We start with considering $R_\calA$-algebra morphisms from $\KK[K]$ to $\tilde S_A$. For $1\le i\le k$, let $\zeta_i$ be a primitive $\ell_i$-th root of unity, and choose $\underline{r}:=(r_i\in\NN\mid 0\le r_i\le \ell_i-1)_{i=1}^k$. Then
\begin{eqnarray*}
\phi_{\underline{r}}\colon \KK[K]&\to& \tilde S_A,\\
\{y_i&\mapsto&(\zeta_i)^{r_i}\}_1^k
\end{eqnarray*}
is a surjective $R_\calA$-algebra morphism. Now set $\phi:=\bigoplus_{\underline{r}}\phi_{\underline{r}}\colon \KK[K]\to \bigoplus_{\underline{r}} \tilde S_A$.

The map $\phi$ is an injective $R_\calA$-module map. Indeed, if $f\in\KK[K]$ is in the kernel of $\phi_{\underline{r}}$, then $f$ is in the ideal generated by $\{y_i-\zeta_i^{r_i}\}_1^k$. Thus, if $f$ is in the kernel of $\phi$ then $f$ is in the ideal generated by $\{y_i^{\ell_i}-1\}_1^k$ and hence zero in $\KK[K]$. But $\phi$ is also surjective, which becomes obvious if one considers one variable at the time: for any $\KK$-algebra $R$, the morphism that sends $(\KK[y]/(y^q-1))\otimes_\KK R$ to $\bigoplus_0^{q-1} R$ by evaluating $y$ to $\zeta_q^i$ on the $i$-th summand is onto since \[
\begin{pmatrix}
1&1&\ldots&1\\
\zeta_q^1&\zeta_q^2&\ldots&\zeta_q^{q-1}\\
\vdots&&&\vdots\\
\zeta_q^{q-1}&\zeta_q^{2q-2}&\cdots&\zeta_q^{(q-1)(q-1)}
\end{pmatrix}
\]
is a non-singular Vandermonde matrix.

It follows that $\KK[K]$ is $R_\calA$-isomorphic to $\bigoplus_{|F|}\tilde S_A$ when $\tilde S_A$ is given its natural $R_\calA$-structure inherited from $\pi(K)$ being the saturation of the semigroup $\pi(\NN\calA)=\NN A$.

Restriction of the map $\phi$ to $\KK[K^\circ]$ shows that $\KK[K^\circ]$ is $R_\calA$-linearly isomorphic to the free sum of $|F|$ copies of $\KK[\pi(K^\circ)]$. The decompositions show that $H_{A,0}(M_T;\beta)$ is isomorphic to $|F|$ copies of $H_{A,0}(M_{\pi(T)};\beta)$, for $T\in \{K,K^\circ\}$.

It is well-known that $H_{A,0}(M_{\pi(K)};\beta)$ has rank equal to the
(simplicial) volume of the convex hull spanned by the origin and $A$, independently of $\beta$.  Moreover,
$\KK[\pi(K^\circ)]$ is a maximal Cohen--Macaulay $S_A$-module,  and so the rank of $H_{A,0}(M_{K^\circ};\beta)$ is independent of $\beta$ as well. It suffices then to consider a generic $\beta$. But the quasi-degrees of $\pi(K)/\pi(K^\circ)$ are contained in the facets of $\calA$, and thus do not contain a generic $\beta$. By Theorem \ref{thm-ttoric-EK}, applied to the long Euler--Koszul homology sequence derived from  $0\to \pi(K^\circ)\to \pi(K)\to\pi(K)/\pi(K^\circ)\to 0$, one concludes that the ranks of
$H_{A,0}(M_{\pi K^\circ};\beta)$ and $H_{A,0}(M_{\pi(K^\circ)};\beta)$ are always equal.
\end{proof}

Finally, we apply the results on duality of Euler--Koszul homology groups to the module $H_{A,0}(M_K;\beta)$.
\begin{thm}
There is a natural non-degenerate pairing
\[
\left(\CC(\boldx)\otimes_{\CC[\bsx]} H_{A,0}(M_K;\beta)\right)\otimes
\left(\CC(\boldx)\otimes_{\CC[\bsx]} H_{A,0}(M_{K^\circ};-\beta-\varepsilon_A)^-\right)\longrightarrow \CC(\boldx).
\]
In particular, if $\Gamma=H^0(\textup{Sol}(H_{A,0}(M_K;\beta)_\xi))$ resp. $\Gamma'=H^0(\textup{Sol}(H_{A,0}(M_{K^\circ};-\beta-\varepsilon_A)_\xi)^-)$ is the space
of (classical) solutions of $H_{A,0}(M_K;\beta)$ resp. $H_{A,0}(M_{K^\circ};-\beta-\varepsilon_A)^-$ near a generic point $\xi$, then we have a non-degenerate pairing
$\Gamma\times\Gamma'\rightarrow \CC$.
\end{thm}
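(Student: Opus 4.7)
The plan is to apply the general duality formula \eqref{eq:Duality} to the module $M_K$, which requires three ingredients: a Cohen--Macaulay certificate for $M_K$, an identification of its $\Ext^{n-d}$ with $M_{K^\circ}$ (as $A$-graded $R_A$-module), and the translation of the resulting isomorphism of holonomic $D_A$-modules into a non-degenerate pairing of solution spaces.

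For the first two ingredients, I would reuse the decomposition established in the proof of Corollary \ref{cor:Rank}: as an $R_\calA$-module (and hence, after $\iota_\pi$, as an $R_A$-module), $M_K \simeq \bigoplus_{|F|} \tilde S_A$ where $\tilde S_A = \KK[\pi(K)]$ is the saturated semigroup ring. By the work of Hochster, Danilov, and Stanley invoked in that proof, $\tilde S_A$ is a Cohen--Macaulay $R_A$-module of dimension $d$, with canonical module $\Ext^{n-d}_{R_A}(\tilde S_A, R_A) \simeq \KK[\pi(K^\circ)]$ (up to the standard grading shift that is precisely absorbed by $\varepsilon_A$). Restricting the isomorphism $\phi$ used in that proof to $\KK[K^\circ]$ gives the parallel decomposition $M_{K^\circ} \simeq \bigoplus_{|F|} \KK[\pi(K^\circ)]$. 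Taking $\Ext^{n-d}_{R_A}$ of the decomposition of $M_K$, commuting the $\Ext$ past the finite direct sum, and matching the $A$-grading then yields $\Ext^{n-d}_{R_A}(M_K, R_A) \simeq M_{K^\circ}$ as $A$-graded $R_A$-modules, with no higher $\Ext$. Together with Cohen--Macaulayness of $M_K$, this puts us in the hypothesis of formula \eqref{eq:Duality}, so
\[
\DD H_{A,0}(M_K;\beta) \;\simeq\; H_{A,0}(M_{K^\circ}; -\beta-\varepsilon_A)^-.
\]

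For the pairing, I use that holonomic duality $\DD$ is an involution on the category of holonomic $D_A$-modules and that, for any holonomic $\scrM$, the generic localization $\CC(\bsx) \otimes_{\CC[\bsx]} \scrM$ is a finite-dimensional $\CC(\bsx)$-vector space whose dimension equals $\dim_{\CC(\bsx)} \CC(\bsx) \otimes_{\CC[\bsx]} \DD\scrM$. Restricting to the (Zariski open) smooth locus where both modules become $\calO$-coherent with flat connection, $\DD$ corresponds to passing to the dual vector bundle with the contragredient connection, so there is a canonical perfect $\CC(\bsx)$-bilinear pairing between $\CC(\bsx)\otimes\scrM$ and $\CC(\bsx)\otimes\DD\scrM$. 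Setting $\scrM=H_{A,0}(M_K;\beta)$ and using the isomorphism above gives the first pairing in the statement. The pairing on classical solutions $\Gamma \times \Gamma' \to \CC$ follows by the Cauchy--Kovalevskaya--Kashiwara theorem, which identifies $\Gamma$ and $\Gamma'$ with the duals of the generic fibres of $\scrM$ and $\DD\scrM$ respectively, so that the above generic perfect pairing of modules descends to the desired perfect pairing of solution spaces.

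The step I expect to require the most care is the second: one must verify that the decomposition $M_K \simeq \bigoplus_{|F|} \tilde S_A$, which was established as an $R_\calA$-linear (and $A$-graded) isomorphism, respects the $A$-graded Matlis-duality package sufficiently well to produce $M_{K^\circ}$ on the nose (up to the shift by $\varepsilon_A$ already built into \eqref{eq:Duality}), rather than some twist of it by a character on $F$. The compatibility of $\phi$ with its restriction to $\KK[K^\circ]$ in the proof of Corollary \ref{cor:Rank} is what makes this work, but the bookkeeping of gradings and the canonical-module shift across the $|F|$ summands is where the argument has to be pinned down precisely.
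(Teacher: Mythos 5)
Your proposal follows essentially the same route as the paper's own proof: invoke formula \eqref{eq:Duality} for the Cohen--Macaulay module $M_K$, identify the dualizing module with $M_{K^\circ}$ using the $\bigoplus_{|F|}\tilde S_A$ decomposition from Corollary~\ref{cor:Rank}, and then pass to the generic point where holonomic duals become mutually dual flat bundles, yielding the perfect pairing (the paper cites the Spencer resolution and \cite{DS} here, which is the same mechanism you describe). You are somewhat more explicit about the $\Ext$-commuting-with-finite-sums step and correctly flag the one place requiring care — the $\varepsilon_A$-shift coming from $\omega_{R_A}=R_A(-\varepsilon_A)$, which is why the paper's displayed isomorphism carries an extra external shift $(\varepsilon_A)$ compared to your version; this is immaterial for the theorem because localization at the generic point forgets the grading.
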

\begin{proof}

It follows from Formula \eqref{eq:Duality} that
$$
\DD H_{A,0}(M_K,\beta)\cong H_{A,0}(M_{K^\circ};-\beta-\varepsilon_A)^-(\varepsilon_A),
$$
since by what has been shown in the proof of Corollary \ref{cor:Rank}, $M_K$ is a Cohen--Macaulay $R_A$-module,
and then its dualizing module is
$M_{K^\circ}$.
Forgetting the graded structure, localize at the generic point. Then we are looking at holonomically dual, smooth $D$-modules. These are hence mutually dual bundles with mutually dual connections, as follows from the Spencer resolution. (See,
\emph{e.g.}, \cite[Lemma A.11]{DS} for the argument in a slightly different but related situation). The theorem follows.
\end{proof}
\begin{rmk}
If $\bar\bolda_1,\ldots,\bar\bolda_n$ lie on an affine hyperplane in $\bar N$, the functor $(-)^-$ is an auto-equivalence of graded $S_A$-modules, since $I_A$ is then projective. If in this case $\beta$ is the zero vector, we obtain a duality result similar to \cite[Theorem 2.4.]{BorEtAl-Dual2}. Formula \eqref{eq:Duality} is more general, since the $D_A$-modules $H_{A,0}(M_K;\beta)$ and $H_{A,0}(M_{K^\circ};-\beta-\varepsilon_A)^-$ cannot be reconstructed from their (classical) solutions, nor from their restrictions to a generic point. On the other hand, holonomic duality of modules always yields a pairing on the level of solutions at a generic point.

It is an interesting project to check whether the concrete construction in \cite{BorEtAl-Dual2} agrees with our functorial pairing since that would provide a
link between the fundamental techniques of \cite{MMW05} and of the
current paper, and applications of hypergeometric systems in mirror symmetry.
\end{rmk}

\bibliographystyle{amsalpha}
\def\cprime{$'$}
\providecommand{\bysame}{\leavevmode\hbox to3em{\hrulefill}\thinspace}
\providecommand{\MR}{\relax\ifhmode\unskip\space\fi MR }
\providecommand{\MRhref}[2]{  \href{http://www.ams.org/mathscinet-getitem?mr=#1}{#2}
}
\providecommand{\href}[2]{#2}

\noindent
Thomas Reichelt\\
Lehrstuhl f\"ur Mathematik VI \\
Institut f\"ur Mathematik\\
Universit\"at Mannheim,
A 5, 6 \\
68131 Mannheim\\
Germany\\
thomas.reichelt@math.uni-mannheim.de\\

\noindent
Christian Sevenheck\\
Fakult\"at f\"ur Mathematik\\
Technische Universit\"at Chemnitz\\
09107 Chemnitz\\
Germany\\
christian.sevenheck@mathematik.tu-chemnitz.de\\

\noindent
Uli Walther\\
Purdue University\\
Dept.\ of Mathematics\\
150 N.\ University St.\\
West Lafayette, IN 47907\\
USA\\
walther@math.purdue.edu\\

\end{document}